\documentclass[11pt,reqno]{amsart}
\usepackage{xifthen}
\usepackage{subfig}
\topmargin=0in
\oddsidemargin=0in
\evensidemargin=0in
\textwidth=6.5in
\textheight=8.4in
\usepackage{amssymb,amsfonts,amsmath,amsthm,amscd,dsfont,mathrsfs,mathtools,bm,comment,enumerate}
\usepackage{graphicx,float,psfrag,epsfig}
\usepackage{cleveref}
\usepackage{wrapfig}
\usepackage[utf8]{inputenc}
\usepackage[T1]{fontenc}
\usepackage[toc]{appendix}
\usepackage[usenames,dvipsnames]{color}

\DeclareMathAlphabet{\mathpzc}{OT1}{pzc}{m}{it}



\newcommand{\edges}{\mathsf{e}}
\newcommand{\verts}{\mathsf{v}}

\definecolor{purple}{rgb}{0.9,0,0.8}

\definecolor{gray}{rgb}{0.7,0.7,0.7}

\newtheorem{propo}{Proposition}[section]
\newtheorem{lemma}[propo]{Lemma}

\newtheorem{remark}[propo]{Remark}

\numberwithin{equation}{section}
\newtheorem{thm}[propo]{Theorem}


\newcommand{\bB}{\mathbb{B}}

\newcommand{\bE}{\mathbb{E}}

\newcommand{\bI}{\mathbb{I}}

\newcommand{\bP}{\mathbb{P}}

\newcommand{\bR}{\mathbb{R}}

\renewcommand{\AA}{\mathcal{A}}
\newcommand{\BB}{\mathcal{B}}
\newcommand{\CC}{\mathcal{C}}

\newcommand{\EE}{\mathcal{E}}
\newcommand{\FF}{\mathcal{F}}
\newcommand{\GG}{\mathcal{G}}

\newcommand{\KK}{\mathcal{K}}
\newcommand{\LL}{\mathcal{L}}

\newcommand{\NN}{\mathcal{N}}

\renewcommand{\SS}{\mathcal{S}}

\newcommand{\VV}{\mathcal{V}}
\newcommand{\WW}{\mathcal{W}}
\newcommand{\XX}{\mathcal{X}}

\newcommand{\ZZ}{\mathcal{Z}}



\newcommand{\sfC}{{\mathsf C}}
\newcommand{\sfE}{{\mathsf E}}
\newcommand{\sfK}{{\mathsf K}}
\newcommand{\sfP}{{\mathsf P}}
\newcommand{\sfS}{{\mathsf S}}

\newcommand{\sk}{\mathsf{k}}


\newcommand{\uu}{\underline}
\newcommand{\wt}{\widetilde}

\newcommand{\abbr}[1]{{\sc\lowercase{#1}}}
\newcommand{\eqnsection}{\renewcommand{\theequation}{\thesection.\arabic{equation}}
	\makeatletter \csname @addtoreset\endcsname{equation}{section}\makeatother}

\def\R{\mathbb R}

\def\N{\mathbb N}

\def\ind{{\mathbb I}}
\def\vep{\varepsilon}





\newcommand\blfootnote[1]{%
	\begingroup
	\renewcommand\thefootnote{}\footnote{#1}%
	\addtocounter{footnote}{-1}%
	\endgroup
}

%

\title[upper tail for constrained homomorphism counts]{Upper tail for homomorphism counts\\ in constrained sparse random graphs}

\author{Sohom Bhattacharya}
\address{Sohom Bhattacharya\newline Department of Statistics, Stanford University, California, USA,
\newline \tt{sohomb@stanford.edu}}

\author{Amir Dembo} 
\address{Amir Dembo\newline Department of Mathematics, Stanford University, California, USA, \newline \tt{adembo@stanford.edu}} 

\date{\today}

\begin{document}

	%
	%
	%
	%
	%
	%
	\begin{abstract}
		Consider the upper tail probability that the homomorphism count of a fixed graph $H$ 
		within a large sparse random graph $G_n$ exceeds its expected value by a fixed factor $1+\delta$. 
		Going beyond the Erd\H{o}s-R\'enyi model, we establish here explicit, sharp upper tail decay rates 
		for sparse random $d_n$-regular graphs (provided $H$ has a regular $2$-core), 
		and for sparse uniform random graphs.
		We further deal with joint upper tail probabilities for 
		homomorphism counts of multiple graphs $H_1,\ldots, H_\sk$ (extending the known results for $\sk=1$),
		and for inhomogeneous
		graph ensembles (such as the stochastic block model), we bound the
		upper tail probability by a variational problem analogous to the one that determines its decay
		rate in the case of sparse Erd\H{o}s-R\'enyi graphs.
	\end{abstract}
	
	\maketitle 
	\section{Introduction}

	\blfootnote{\textup{2010} \textit{Mathematics Subject Classification}: \textup{05C80}, \textup{60C05}, \textup{60F10}} 
	\blfootnote{\textit{Keywords and phrases}: \textup{graph homomorphism}, 
		\textup{sparse random graph},
		\textup{d-regular graphs},
		\textup{large deviations}
	}
	\noindent 
	Let $\mathrm{Hom}(H,G)$ denote the number of copies of a \emph{connected} 
	graph $H=(V=[\verts(H)],E)$ present within 
	some other graph $G$ of $n$ vertices, which in terms of the adjacency matrix $A_G$ of $G$, is 
	\begin{equation} \label{eq:homdef}
	\mathrm{Hom}(H,G) \coloneqq \sum_{\substack{{\phi: [\verts(H)] \rightarrow [n]}}} \prod_{(k,l) \in E} A_G(\phi(k),\phi(l)).
	\end{equation}
	The upper tail homomorphism problem 
	for a given non-random, connected $H$, $\delta>0$ fixed and $G=G_n$ drawn from an ensemble 
	of random graphs on $n$ vertices with law $\bP$, is to estimate the tail probability rate 
	\begin{equation}\label{dfn:UT-pbm}
	\textrm{UT}(H,n,\delta) \coloneqq - \log \bP(\mathrm{Hom}(H,G_n) \geq (1+\delta) \mathbb{E}(\mathrm{Hom}(H,G_n))).
	\end{equation}
	This question has been extensively studied  in the context of Erd\H{o}s-R\'enyi (\abbr{ER}) binomial
	random graphs $\GG(n,p)$ (namely, when each edge independently selected 
	with probability $p$). First,  the 
	growth rate of $\textrm{UT}(H,n,\delta)$ as $n \to \infty$, 
	was established after considerable effort
	in the sparse regime, provided $p=p(n) \to 0$ at a slow
	enough rate
	(cf. \cite{chalog,doi:10.1002/rsa.20440,doi:10.1002/rsa.20382,jor,jr1,jr2,kimvu,vu} and related
	questions in the texts \cite{bol, jbook}).
	Then, relying on regularity and compactness 
	properties of the cut-metric, Chatterjee and Varadhan \cite{cv2,cv} proved the large deviation 
	principle (\abbr{LDP}) in the dense regime, where $p \in (0,1)$ is fixed. In particular, they 
	estimate $\textrm{UT}(H,n,\delta)$ by a variational problem over the space of graphons,
	within $1+o(1)$ relative error as $n \rightarrow \infty$. Following \cite{cv2,cv}, a 
	region where a constant graphon is optimal for such variational problems is characterized in  
	\cite{lz1} (and recently \cite{dhsen} establishes the \abbr{LDP}
	for uniform graphs of prescribed degrees $\{d_i\}$, in the dense regime where $d_i = O(n)$).
	
	To describe what is known in the sparse regime $p \to 0$,  for $X= (x_{ij})$
	from the collection $\XX_n$ of symmetric, $n \times n$ matrices  with $[0,1]$-valued 
	entries and zero main diagonal (ie $x_{ii} \equiv 0$), we let
	\[
	\mathrm{hom}(H,X) \coloneqq n^{-\verts(H)}p^{-\edges(H)} \sum_{\substack{{\phi: [\verts(H)] \rightarrow [n]}}} \prod_{(k,l) \in E(H)} X(\phi(k),\phi(l)) \,,
	\]
    denote the normalized weighted count of copies of a given connected $H$ having $\verts(H)=|V(H)|$ 
	vertices and  $\edges(H) = |E(H)|$ edges, further using
	\[
	I_p(X) \coloneqq \sum_{\substack{1 \leq i \neq j \leq n}} I_p(x_{ij}), \quad \textrm{ where } \quad 
	I_p(x) \coloneqq x \log \frac{x}{p}+(1-x)\log \frac{1-x}{1-p} \,,
	\]
	for the relative entropy of such $X \in \XX_n$ \abbr{wrt} the parameter $p$.
	The sparse regime poses extra difficulties, as graphon theory is no longer applicable.
	In lieu of that, a general scheme is introduced in \cite{chd} for approximating the partition function 
	of a Gibbs measure on the hypercube, whose potential 
	has a low complexity gradient. Utilizing this approach, \cite{chd} show that 
	for $\GG(n,p)$ under certain modest polynomial decay of $p(n)$, the upper tail rate 
	$\textrm{UT}(H,n,t-1)$ is for $t>1$ within $1+o(1)$ relative error of 
	\begin{align}\label{eq:0}
	\Phi_{n,p}(H,t) & \coloneqq \frac{1}{2}\inf\{I_p(X) : \quad X\in \XX_n, \quad \textrm{ hom}(H,X)\geq t\} 
	\end{align}
	(c.f. \cite{Chabook}). Invoking stochastic analysis tools, Eldan \cite{Eldan2018} obtains general conditions 
	for approximating such Gibbs measures by a mixture
	of products, and as a result relaxes somewhat the restriction of \cite{chd} on the decay of $p(n)$. 
	Beyond functions on the hyper-cube, \cite{jun} adapts the approach of \cite{chd} to general Banach
	spaces, whereas  Austin \cite{aus} utilizes information inequalities to extend Eldan's 
	results to arbitrary product spaces. Taking different, more direct approaches, 
	\cite{cod} and Augeri \cite{augeri}, independently establish large deviation results 
	for a host of spectral and geometric functionals on the hypercube, 
	and in particular 
	extend the $1+o(1)$ relative error between \eqref{dfn:UT-pbm} and \eqref{eq:0}, 
	to a much larger sparsity regime. For a specific sub-class of graphs $H$, even 
	smaller $p(n)$ is allowed in \cite{samotij} which develops for this  
	a method of entropic stability, and in its followup \cite{bb} which extends 
	these results to a larger sub-class of graphs $H$.
	
	While the analysis in all these works relies 
	on the independence and homogeneity 
	inherent of $\GG(n,p)$, as well as the simpler geometry of the tail event when only one $H$ is 
	considered, we dispense here from most of these restrictions. Specifically, 
	\Cref{th:correg+unif}
	expands our understanding of the upper tail problem, by considering 
	random graphs $G_n^{(m)}$ chosen uniformly among all graphs of $n$ vertices and $m$ edges,
	as well as uniformly chosen random regular graphs $G_n^d$ on $n$ vertices, each having the same 
	degree $d=d_n$. Similarly to the \abbr{ER}-model, for both $G_n^d$ and $G_n^{(m)}$ 
	the relevant large deviation events correspond to planting specific small structures
	within $G_n$. However, the degree constraints sometimes 
	prohibit the planting strategy optimal for the \abbr{ER}-case, requiring us to achieve the 
	excess count by planting multiple disjoint small structures and  
	to develop in the proof of \Cref{thm:regvar}
	new tools for the study of the relevant variational problem. 
	Turning back to edge independence, \Cref{th:sbmprob} shows that also 
	for an inhomogeneous setting (such as the stochastic block model), the upper tail 
	probability decay rate boils down to a suitable variational problem, while within
	the \abbr{ER}-model $\GG(n,p)$, as well as for the uniformly random graphs $G_n^{(m)}$,
	\Cref{prop:jut} provides the complete solution of the upper tail problem for 
	joint counts of graphs $\{H_i, i=1,\ldots,k \}$.
	
	Throughout we adjust across different ensembles for equivalent sparsity. 
	That is, we parameterize $G_n^d$ via $p=d/n$ and likewise parameterize
	$G_n^{(m)}$ via $p=m/{n \choose 2}$. Denoting by $\Delta=\Delta(H)$ the maximal degree in $H$,
	and writing hereafter $a_n \sim b_n$ whenever $a_n/b_n = 1 + o(1)$, 
	recall that for any $p=p(n) \gg n^{-1/\Delta}$ one has that
	$\bE[\mathrm{Hom}(H,G_n)] \sim n^{\verts(H)} p^{\edges(H)}$ in the \abbr{ER}-model.
	It is easy to see that this applies also for  $G_n^{(m)}$ in such regime of $p(n)$,
	while \cite[Corollary 2.2]{ksv} establishes
	the same conclusion for $G_n^d$. 
	Thus, using the normalized $\mathrm{hom}(H,G)$ for a random graph $G$ on $n$
	vertices from either of our ensembles, and setting $\bP^{(m)}$ and
	$\bP^{d}$ for the laws of $G_n^{(m)}$ and $G_n^d$, we have
	in analogy with \eqref{dfn:UT-pbm}, the upper tails
	\begin{equation}\label{dfn:UT-pbm-cons}
	\begin{aligned}
	\mathrm{UT}^{(m)}(H,n,\delta) &:= - \log \bP^{(m)} (\mathrm{hom}(H,G_n^{(m)}) \geq 1+\delta) \,, \\ 
	\textrm{UT}^d(H,n,\delta) &:= - \log \bP^{d}(\mathrm{hom}(H,G^d_n) \geq 1+\delta) \,.
	\end{aligned}
	\end{equation}
	Recall the collection $\XX_n$ of adjacency matrices for
	$[0,1]$-weighted simple graphs on $n$ vertices, while 
	\[
	\XX^{(m)}_n := \{(x_{ij}) \in \XX_n : \sum_{i,j=1}^n x_{ij}=m \} \,,
	\quad
	\XX^d_n := \{ (x_{ij}) \in \XX_n : \sum_{i=1}^n x_{ij} =d, \quad 1 \leq j \leq n \}\,,
	\]
	indicate such matrices for graphs of a given total weight, or of a given constant vertex
	weight, respectively. Indeed, we show in the sequel that the corresponding rate functions for homomorphism counts
	within uniform random graphs and within random $d$-regular graphs are:
	\begin{align}\label{eq:1}
	\Phi^{(m)}_n(H,t) 
	& \coloneqq \frac{1}{2}\inf\{I_p(X) : \quad X\in \XX^{(m)}_n, \quad \mathrm{ hom}(H,X)\geq t\}, \\
	\label{eq:2}
	\Phi^d_n(H,t) & \coloneqq \frac{1}{2}\inf\{I_p(X) : \quad X\in \XX^{d}_n, \qquad 
	\mathrm{ hom}(H,X)\geq t\}
	\end{align}
	(bounds on $UT^{(m)}(H,n,\delta)$ are given in \cite[Thm. 4.1]{jor}, 
	with $\Phi^{(m)}_n(H,t)$ appearing in \cite[Prop. 3.3]{dl}, 
	where the asymptotic of $\mathrm{UT}^{(m)}(H,n,\delta)$ is established for 
	the very slow decay $p(n) \gg (\log n)^{-1/(2\edges(H))}$).
	
	Recall that, the $k$-core of a graph is the maximal subgraph  within which 
	every vertex has degree at least $k$. It is not hard to check that  
	the rate of growth of
	each of 
	the variational problems \eqref{eq:0} and \eqref{eq:1} is $a_{n,p}\coloneqq n^2p^\Delta \log(1/p)$
     (which is also the rate for \eqref{eq:2} when the $2$-core of $H$ is $\Delta$-regular).
	More precisely,  it is shown in \cite{bglz} 
	that for any $\delta>0$, connected graph $H$ of maximal degree 
	$\Delta \geq 2$ and $n^{-1/\Delta} \ll p =o(1)$, for the normalized variational problem 
	$\phi_{n,p} (\cdot) \coloneqq a_{n,p}^{-1} \Phi_{n,p} (\cdot)$ one has that 
	\begin{equation}\label{eq:ervar}
	\lim\limits_{n \rightarrow \infty} \phi_{n,p}(H,1+\delta)= c(H,\delta) \coloneqq \begin{cases}
	\min \{\theta, \frac{1}{2} \delta^{2/\verts(H)} \}, & \text{for regular } H, \\
	\theta, & \text{ otherwise}
	\end{cases}
	\end{equation}
	(with triangle counts, namely $H=\sfC_3$, settled earlier in \cite{lz2}).
	Here $\theta = \theta(H, \delta)$ is the unique positive solution of $\sfP_{H^\star}(\theta)=1+\delta$, 
	for the independence polynomial $\sfP_{H^\star}(\cdot)$ of the sub-graph 
	$H^\star = H[V^\star]$ induced by $H$ on its set of vertices $V^\star \subset V$ of 
	degree $\Delta$. The two expressions on the \abbr{rhs} of \eqref{eq:ervar} correspond to 
	planting a relatively small clique, at rate $\delta^{2/\verts(H)}$, 
	or hub (=anti-clique), at rate $\theta$. This interpretation is further  
	detailed in \Cref{rmk:clique+hub}, where for joint $\sk \ge 2$ homorphism counts
	one often gets a 
	clique+hub planting as the optimal solution. We further note in passing 
	that such a variational problem for lower tails is addressed in \cite{lower}, with \cite{ap}
	and \cite{bg} studying analogous variation problems for arithmetic progressions on random sets 
	and for the upper tail of edge eigenvalues in case of the \abbr{er}-model.
	
	Utilizing the same normalization, 
	we turn  to  the explicit solution of 
	\eqref{eq:2}, noting first that for any $X \in \XX_n^d$ the value of 
	$\mathrm{hom}(H,X)$ is invariant to removal from $H$ any vertex of
	 degree one, since summing
	 	over the index of such a vertex merely gives a multiplicative factor 
	 	$d=np$. In particular, if $H$ is a tree then 
	$\textrm{UT}^d(H,n,\delta) =  - \Phi^d_n(H,1+\delta)=-\infty$ for any
	$\delta>0$, while replacing any other $H$ by its 
	non-empty $2$-core changes neither $\textrm{UT}^d(H,n,\delta)$ 
	nor $\Phi^d_n(H,t)$. Thus, whenever we consider $G_n^d$, we assume \abbr{wlog} 
	that the minimal degree of $H$ is at least two.
	\begin{thm}\label{thm:regvar}
		For $\delta > 0$, a connected graph $H$ having 
		no vertices of degree one and maximal degree $\Delta \geq 2$,
		and for any $n^{-\frac{1}{\Delta}} \ll p := \frac{d}{n} = o(1)$, 
		\begin{equation}\label{eq:regclq}
		\lim\limits_{n \rightarrow \infty} \phi^d_n(H,1+\delta)= c^d(H,\delta) \coloneqq 
		\frac{1}{2} \begin{cases}
		\lfloor \delta \rfloor + \{\delta\}^{2/\verts(H)}, &  \qquad \Delta =2, \\
		\qquad \quad \delta^{2/\verts(H)}, &  \Delta\textrm{-}\mathrm{regular} \;H, \; \Delta \geq 3, \\
		\qquad \quad \infty, & \qquad \mathrm{otherwise,}
		\end{cases}
		\end{equation}
		where $\phi_n^d(\cdot) := a_{n,p}^{-1} \Phi_n^d(\cdot)$ is the normalized value of \eqref{eq:2}
		and $\{\delta\}$ denotes the fractional part of $\delta$.
	\end{thm}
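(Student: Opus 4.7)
The plan is to establish matching upper and lower bounds on $\Phi_n^d(H,1+\delta)$ in each of the three cases. The upper bound comes from explicit planting constructions in $\XX_n^d$; the lower bound adapts the variational techniques of \cite{bglz} for the Erd\H{o}s--R\'enyi model to the row-sum constrained ensemble.

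For the upper bound in the $\Delta$-regular case with $\Delta \geq 3$, I would plant a single clique $S$ of size $k = \lceil \delta^{1/\verts(H)} n p^{\Delta/2} \rceil$, setting $X_{ij}=1$ for distinct $i,j \in S$ and adjusting the remaining entries to values close to $p$ so that $\sum_j X_{ij}=d$ in each row. Since $\Delta \geq 3$ makes $p^{\Delta/2 - 1} \to 0$, we have $k \ll np$, and the corrections to off-clique entries cost only $o(a_{n,p})$. The dominant cost $\binom{k}{2}\log(1/p) \sim \tfrac{1}{2}\delta^{2/\verts} a_{n,p}$ matches the claimed rate, while $\mathrm{Hom}(H,K_k) \sim k^\verts$ yields $\mathrm{hom}(H,X) \geq 1+\delta$. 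For the cycle case ($\Delta = 2$), a single clique of size exceeding $np$ would force off-clique entries to drop far below $p$, contributing $\Theta(a_{n,p})$ of additional cost; so I would instead plant $\lfloor \delta \rfloor$ disjoint cliques of size $\approx np$ (each contributing $1$ to normalized $\mathrm{hom}$) plus one smaller clique of size $\approx \{\delta\}^{1/\verts}np$ (contributing $\{\delta\}$), yielding total cost $\tfrac{1}{2}(\lfloor \delta \rfloor + \{\delta\}^{2/\verts}) a_{n,p}$.

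For the lower bound, the main difficulty is that $\XX_n^d \subset \XX_n$ combined with \cite{bglz} only yields $\Phi_n^d(H,1+\delta) \geq c(H,\delta) a_{n,p}(1+o(1))$ with $c(H,\delta) = \min\{\theta,\tfrac{1}{2}\delta^{2/\verts}\}$, and this bound is strictly weaker than the claim whenever $\theta < \tfrac{1}{2}\delta^{2/\verts}$ (as happens for small $\delta$, since $\theta \approx \delta/\verts$ from $\sfP_{H^\star}(\theta) = 1+\delta$). The key new ingredient is the row-sum constraint: a vertex with many entries $X_{ij} \approx 1$ would have row sum substantially exceeding $d = np$, so mass concentration in $X$ must be bilateral (clique-like) rather than unilateral (hub-like). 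I would formalize this via a decomposition of an optimal $X$ into its near-$1$ support and the rest; convexity of $I_p(\cdot)$ together with the row-sum constraint should then force the near-$1$ support to be approximately a disjoint union of cliques $S_\alpha$ of sizes $k_\alpha$, giving $I_p(X) \gtrsim \sum_\alpha \binom{k_\alpha}{2}\log(1/p)$. A further technical ingredient is controlling the additional background-adjustment cost $\sim k_\alpha^3/(np)$ per clique (from off-clique entries needing to decrease below $p$ to preserve row sums), which is negligible for $k_\alpha \ll np$ but divergent as $k_\alpha \to np$ when $\Delta \geq 3$, imposing an effective ceiling $k_\alpha \leq np + o(np)$ relevant in the cycle case.

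The resulting discrete optimization $\min\{\tfrac{1}{2}\sum_\alpha k_\alpha^2\log(1/p) : \sum_\alpha k_\alpha^\verts \geq \delta n^\verts p^\edges,\ k_\alpha \leq np\}$ then yields each of the three rates: a single clique of size $\delta^{1/\verts}np^{\Delta/2}$ for the $\Delta \geq 3$ regular case giving $\tfrac{1}{2}\delta^{2/\verts}$; saturated cliques of size $\approx np$ for $\Delta=2$ giving $\tfrac{1}{2}(\lfloor \delta \rfloor + \{\delta\}^{2/\verts})$; and divergence for non-regular $H$ (where $2\edges/\verts < \Delta$ prevents any multiset of cliques with $k_\alpha \leq np$ from realizing $\delta$ at bounded normalized cost as $p \to 0$). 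The main obstacle will be the structural step, namely the rigorous reduction from arbitrary $X \in \XX_n^d$ to approximate clique configurations; this likely requires extending the regularity/decomposition techniques of \cite{cod,augeri,samotij} to the $d$-regular setting while tracking the row-sum constraint.
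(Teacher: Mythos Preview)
Your upper bound constructions match the paper's (Proposition~\ref{opt-reg}). The lower bound, however, rests on a structural step you yourself flag as the main obstacle --- reducing an arbitrary near-optimal $X\in\XX_n^d$ to an approximate disjoint union of cliques --- and this step is neither proved nor available in the cited literature. There is no obvious reason the row-sum constraint forces clique-like (rather than, say, complete-bipartite-like) near-$1$ support, and without that reduction your discrete optimization over clique sizes does not control $\Phi_n^d$. In particular, for irregular $H$ your argument only rules out clique plantings, not all configurations, so it does not establish $\phi_n^d\to\infty$.

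The paper bypasses any structural reduction by working analytically. For cycles ($\Delta=2$) the argument is spectral: the row-sum constraint bounds every eigenvalue $\lambda_i$ of $X$ by $d$ in absolute value, while $\mathrm{hom}(\sfC_l,X)=\sum_i(\lambda_i/d)^l$ and $I_p(X)\ge(1-o(1))\,a_{n,p}\bigl(\sum_i(\lambda_i/d)^2-1\bigr)$ via \eqref{eq:50}; a short convexity lemma (Lemma~\ref{th: realbd}) then shows the infimum of $\sum\eta_i^2-1$ over $|\eta_i|\le1$, $\sum\eta_i^l\ge1+\delta$ equals $\lfloor\delta\rfloor+\{\delta\}^{2/l}$. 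For $\Delta\ge3$ the paper passes to graphons: writing $W=p+U$, the row-sum constraint becomes the zero-marginal condition $\int_0^1 U(x,y)\,dy=0$, and one expands $t(H,p+U)$ over subgraphs $F\subseteq H$ as in \cite{bglz}. The key Lemma~\ref{lemma: small} shows this zero-marginal condition kills every term with $F\neq H$ --- precisely the hub contributions from $\FF_H$ that survive in the unconstrained problem --- because each such $F$ contains a path component at whose endpoint one integrates out a degree function $d^+(x)=\int U^+(x,y)\,dy$, whose $L^2$-norm is forced small by an entropy argument. For $\Delta$-regular $H$ the only surviving term $t(H,U)$ is bounded by $\|U\|_2^{2\edges/\Delta}$ via generalized H\"older, giving the lower bound directly from $I_p(p+U)\gtrsim\|U\|_2^2\log(1/p)$; for irregular $H$ even $t(H,U)=o(p^{\edges})$, so the constraint is infeasible at scale $a_{n,p}$ and $\phi_n^d\to\infty$.
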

	
\begin{remark}\label{rmk: nochange} 
		The degree constraints of $G_n^d$ 
		rule out any hub and further limit the allowed planted clique size. Hence
		our result in \Cref{thm:regvar}, corresponding for regular $H$ to the planting of
		$\lceil \delta \rceil$ disjoint cliques when $\Delta=2$ (higher values of
		$\Delta$ require smaller clique size, so our
		size limit no longer affects the solution, see $X_n^\star$ of \eqref{eq:opt-cyc} versus
		\eqref{eq:opt-clique}). When $H$ as in \Cref{thm:regvar} is irregular with $\Delta \ge 3$,
		as shown already in \eqref{eq:regclq} even
	    the growth rate of $\Phi_n^d(H,1+\delta)$ differs from that for  the \abbr{ER}-model. 
		While our paper was under review, 
		\cite[Theorem 2.7]{ben} established the growth rate of $\Phi_n^d(H,1+\delta)$
		for a large class of irregular graphs $H$.
	
	\end{remark}

	As promised before, we next show that the variational problems we solved in
	\Cref{thm:regvar} control the asymptotic rates of 
	$\text{UT}^d(H,n,\delta)$, whereas $\text{UT}^{(m)}(H,n,\delta)$ follow the same 
	asymptotic as $\text{UT}(H,n,\delta)$ (indeed, the relatively small structures 
	which dominate the upper tail variational problems for small $p(n)$, are
unaffected by a global edge constraint). 		
			
	\begin{propo}\label{thm:regprob}
		For a graph $H=(V,E)$ of maximal degree $\Delta \geq 2$, set 
		$$
		\Delta_{\star} (H) \coloneqq \frac{1}{2} \max_{\{v_1,v_2\} \in E(H)} \{ \deg_H(v_1)+\deg_H(v_2)\} \geq 1\,.
		$$
		Then, denoting by $\sfC_l$ a cycle of length $l \ge 3$, for fixed $t>1$ and any 
		\begin{equation}\label{eq:p-range}
		1 \gg p \gg 
		\begin{cases} & \max(n^{\frac{2}{l}-1},\frac{(\log n)^{\frac{l}{2l-4}}}{\sqrt{n}}) \,, \qquad \qquad H=\sfC_l, \\
		&  n^{-1/(2 \Delta_{\star}(H))}(\log n)^{1/(2\Delta_\star(H))} \,, \;\;\;  \mbox{ otherwise},
		\end{cases}
		\end{equation}
		one has that for some $\kappa_n \uparrow \infty$,
		\begin{align}\label{eq:3}
		a_{n,p}^{-1} \log\bP^{(m)}(\mathrm{hom}(H,G^{(m)}_n) \geq t) &\leq -\phi_{n,p}(H,t-o(1))+o(1), \\
		\label{eq:4}
		a_{n,p}^{-1} \log\bP^{d}(\mathrm{hom}(H,G^d_n) \geq t) &\leq -(\phi^d_n(H,t-o(1))-o(1) )\wedge \kappa_n.
		\end{align}
	\end{propo}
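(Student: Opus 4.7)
For \eqref{eq:3}, the plan is to exploit the identification of $G_n^{(m)}$ as $\mathcal{G}(n,p)$ conditioned on $\{|E|=m\}$, with $p = m/\binom{n}{2}$. This yields
$$
\bP^{(m)}(\mathrm{hom}(H,G_n^{(m)}) \geq t) \,\leq\, \frac{\bP(\mathrm{hom}(H,G_n) \geq t)}{\bP(|E(G_n)|=m)} \,.
$$
The denominator is $\Theta(1/\sqrt{n^2 p})$ by the local central limit theorem for $\mathrm{Binomial}(\binom{n}{2},p)$, contributing only $O(\log n) = o(a_{n,p})$ on the log scale throughout the range \eqref{eq:p-range}. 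For the numerator I would invoke the $\mathcal{G}(n,p)$ upper-tail \abbr{LDP} in the required sparsity, available from \cite{chd, Eldan2018, cod, augeri, samotij}: $\bP(\mathrm{hom}(H,G_n) \geq t) \leq \exp(-\Phi_{n,p}(H,t-o(1))(1+o(1)))$. Taking logs, normalizing by $a_{n,p}$, and absorbing the $O(\log n)$ correction into the $o(1)$ error gives \eqref{eq:3}.

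For \eqref{eq:4} the analogous conditioning fails, since $\bP(G_n \text{ is } d\text{-regular})$ is exponentially (not polynomially) small in $n$. Instead, the plan is to bound directly the number of $d$-regular graphs $G$ with $\mathrm{hom}(H,G) \geq t$ by: (i) showing that any such $G$ contains an approximate planted substructure $F$ matching one of the optimizers of \eqref{eq:2} classified in the proof of \Cref{thm:regvar} and in \Cref{rmk: nochange}; (ii) enumerating the $d$-regular extensions of $F$ to the full vertex set $[n]$ via switchings or the configuration model (in the spirit of \cite{ksv,dhsen}); and (iii) summing over the admissible plantings $F$ and dividing by the total number of $d$-regular graphs on $n$ vertices. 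Step (ii) should produce a factor $\exp(-\Phi_n^d(H,t-o(1))(1-o(1)))$ up to polynomial corrections. The cutoff $\kappa_n \uparrow \infty$ in \eqref{eq:4} intervenes precisely when $\phi_n^d(H,t) = \infty$ (irregular $H$ with $\Delta \geq 3$, by \eqref{eq:regclq}), where one can only prove an upper bound of rate $\kappa_n a_{n,p}$ with $\kappa_n$ diverging slowly, by excluding plantings of size exceeding a $\kappa_n$-dependent threshold.

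The main obstacle lies in rigorously bridging the discrete enumeration of $d$-regular graphs with large $\mathrm{hom}(H,\cdot)$ and the continuous variational problem \eqref{eq:2} over $\XX_n^d$. In the \abbr{ER} setting this bridge is supplied by the nonlinear large deviations framework of \cite{chd, cod, augeri}, which crucially exploits the product structure of $\mathcal{G}(n,p)$; here the $d$-regular constraint couples all entries of the adjacency matrix, so standard gradient-covering arguments are not directly applicable. I would address this by combining local switchings, which preserve the degree sequence while moving between graphs and thus reduce ratios of counts to local exchange estimates, with the fact that the extremal plantings identified in \Cref{thm:regvar} are supported on $o(n)$ vertices. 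Outside of such plantings the $d$-regular ensemble should be quantitatively close to $\mathcal{G}(n,p)$, allowing the \abbr{ER} nonlinear \abbr{LDP} to be transplanted to the bulk and the variational analysis carried out in the proof of \Cref{thm:regvar} to be fed back into the counting problem.
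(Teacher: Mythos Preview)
Your argument for \eqref{eq:3} is correct and coincides with the paper's: one writes $\bP^{(m)}(\AA_n)\le \bP_p(\AA_n)/\bP_p(\KK_n^{(m)})$ via conditioning (the paper states this as Pittel's inequality $\bP_p(\KK_n^{(m)})\ge 1/(3\sqrt{m})$), and then invokes the \abbr{ER} upper-tail bound of \cite{cod}.

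For \eqref{eq:4}, however, there is a genuine gap: you dismiss the conditioning route on the grounds that $\bP_p(G_n\text{ is }d\text{-regular})$ is exponentially small in $n$. It is, but at the \emph{wrong} scale to matter. Using the enumeration of \cite{wormald} one finds $\log\bP_p(\KK_n^d)\asymp -n\log d$, and throughout the range \eqref{eq:p-range} one has $n\log(np)=o(a_{n,p})$ since $a_{n,p}=n^2p^\Delta\log(1/p)$ with $np^\Delta\gg 1$. Hence $a_{n,p}^{-1}\log\bP_p(\KK_n^d)\to 0$, and the identity $\bP_p(\AA_n\cap\KK_n^d)=\bP_p(\KK_n^d)\,\bP^d(\AA_n)$ reduces \eqref{eq:4} to bounding $\bP_p(\{\mathrm{hom}(H,G_n)\ge t\}\cap\KK_n^d)$.

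The second point you miss is that this joint probability is handled by the \emph{same} covering machinery of \cite{cod}, not by a new combinatorial enumeration. The non-asymptotic bound behind \cite[Cor.~2.2]{cod} extends verbatim when one intersects everything with a fixed closed convex set $K\subset[0,1]^{n_\edges}$: one obtains
\[
\mu_p(\{h\ge t\}\cap K)\le |\mathbb{I}|\exp\Big(-\inf_{\substack{h(\uu x)\ge t-\delta\\ \uu x\in K}} I_p(\uu x)\Big)+\mu_p(\EE\cap K),
\]
with the identical cover $\{B_i\}$ and exceptional set $\EE$ as in the unconstrained case. Taking $K$ to be the (convex) image of $\XX_n^d$ immediately replaces $\phi_{n,p}$ by $\phi_n^d$ in the exponent; the $\wedge\kappa_n$ arises only because one lacks an a~priori upper bound on $\phi_n^d$ (needed to absorb the $\mu_p(\EE)$ term), not from any separate ``irregular $H$'' argument. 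Your proposed switching/enumeration scheme is therefore unnecessary, and the worry that the degree constraint ``couples all entries'' and obstructs gradient-covering is unfounded: the covering is done on the full cube and only the entropy minimization is restricted to $K$.
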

\begin{remark}\label{rem:samot}
Similarly to \Cref{thm:regvar}, we replace $H$ in \eqref{eq:4} by its
 $2$-core before setting $\Delta$, $a_{n,p}$ and the  allowed range \eqref{eq:p-range} for $p(n)$.
Since we get \eqref{eq:3} by a direct comparison with the \abbr{ER}-model, 
for a $\Delta$-regular $H$ one has that \eqref{eq:3} holds
upto $p \gg n^{-1/\Delta}$, by relying on \cite[Theorem 1.5]{samotij} or \cite[Theorem 1.2]{bb} instead of \cite{cod}. 
A similar improvement may likewise hold  in \eqref{eq:4} 
when the $2$-core of $H$ is $\Delta$-regular.
\end{remark}

Building on \Cref{thm:regvar} and \Cref{thm:regprob}, we get
	the following.
	\begin{thm}$~$\label{th:correg+unif}
 Fix $\delta>0$ and connected graph $H$.
		\newline
		(a). Replacing $H$ by its $2$-core of maximal degree $\Delta \ge 2$,
		for any $p=\frac{d}{n}$ as in \eqref{eq:p-range},
		\begin{equation}\label{eq:8}
		\lim\limits_{n\rightarrow\infty} a_{n,p}^{-1} \, \mathrm{UT}^d(H,n,\delta) = c^d(H,\delta).
		\end{equation}
		(b). Assuming $\Delta(H)=\Delta \ge 2$, for any $p=m/{n \choose 2}$ as in \eqref{eq:p-range},
		\begin{equation}\label{eq:7}
		\lim\limits_{n\rightarrow\infty} a_{n,p}^{-1} \, \mathrm{UT}^{(m)}(H,n,\delta) = c(H,\delta).
		\end{equation}
	\end{thm}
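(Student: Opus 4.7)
The plan is to sandwich $a_{n,p}^{-1}\mathrm{UT}^d(H,n,\delta)$ and $a_{n,p}^{-1}\mathrm{UT}^{(m)}(H,n,\delta)$ between matching upper and lower limits, equal to $c^d(H,\delta)$ and $c(H,\delta)$ respectively. The \emph{lower} bound on the tail rate (equivalently, the upper bound on the tail probability) is essentially immediate from the preceding results: for (a), feeding \Cref{thm:regvar} into \eqref{eq:4} yields
\[
a_{n,p}^{-1}\mathrm{UT}^d(H,n,\delta)\;\ge\;\bigl(\phi^d_n(H,1+\delta-o(1))-o(1)\bigr)\wedge\kappa_n\;\longrightarrow\;c^d(H,\delta),
\]
using monotonicity of $\phi^d_n(H,\cdot)$ in $t$ together with continuity of $\delta\mapsto c^d(H,\delta)$ to absorb the $o(1)$ slack. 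The analogous conclusion for (b) uses \eqref{eq:3} together with the \abbr{ER} limit \eqref{eq:ervar}.

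The substantive work is the matching \emph{upper} bound on the tail rate, which I would establish by exhibiting explicit near-optimal planted configurations in each ensemble. For $G_n^{(m)}$ I would plant the same extremal structure (clique and/or hub) that drives the minimizer in \eqref{eq:ervar}, then transfer the \abbr{ER} lower-bound construction using the identity $\bP^{(m)}(\cdot)=\bP_{\GG(n,p)}(\,\cdot\,\mid |E(G_n)|=m)$; since the edge-count probability is only polynomially small in $n$ and the planted structure uses $o(m)$ edges, this costs only $o(a_{n,p})$ on the logarithmic scale. For $G_n^d$ I would plant according to the optimizer $X_n^\star$ identified in proving \Cref{thm:regvar}: when $\Delta=2$, place $\lceil\delta\rceil$ vertex-disjoint cliques ($\lfloor\delta\rfloor$ of size $\sim np$, each contributing a full unit to the normalized homomorphism count, plus one fractional clique of size $\sim\{\delta\}^{1/\verts(H)}np$); when $H$ is $\Delta$-regular with $\Delta\ge 3$, plant a single clique of size $\sim\delta^{1/\verts(H)}np$. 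The probability of such a planting inside $G_n^d$ is then estimated via asymptotic enumeration of $d$-regular graphs (the configuration model in the sparser portion of \eqref{eq:p-range}, and McKay--Wormald or switching arguments in its denser part), yielding $\exp\bigl(-a_{n,p}c^d(H,\delta)(1+o(1))\bigr)$.

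The principal obstacle is the $d$-regular lower bound on probability: the planted cliques saturate the degree budget on their vertex set, so completing to a $d$-regular graph on $[n]$ requires a precise enumeration of $d$-regular completions of a partial graph with spatially non-uniform degree deficits, with $(1+o(1))$ control in the exponent. Exactly this constraint forces the $\Delta=2$ case to split into \emph{multiple} small cliques rather than one large one: a single clique of size $(1+\delta)^{1/\verts(H)}np$ would demand degrees exceeding $d=np$, which is forbidden under $\bP^d$; hence the optimum reorganizes into $\lceil\delta\rceil$ disjoint cliques each of size at most $np$. Once these enumeration estimates are in place with the required precision, \Cref{thm:regvar} converts them into the limit $c^d(H,\delta)$ and the proof is complete.
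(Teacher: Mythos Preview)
Your proposal is correct and follows essentially the same route as the paper: the lower bound on the rate comes from \Cref{thm:regprob} combined with \Cref{thm:regvar} (resp.\ \eqref{eq:ervar}), while the upper bound is obtained by planting the extremal block configurations $X_n^\star$, controlling their cost in $G_n^d$ via regular-graph enumeration (McKay--Wormald for the full-size $(d+1)$-cliques and a switching lemma for the sub-maximal one) and in $G_n^{(m)}$ via Pittel's inequality, together with a Janson-type lower-tail bound to ensure the unplanted bulk contributes its typical $\sim 1$ to $\mathrm{hom}(H,\cdot)$. One slip to correct: for $\Delta$-regular $H$ with $\Delta\ge 3$ the planted clique has size $\sim\delta^{1/\verts(H)}\, n p^{\Delta/2}$ (cf.\ \eqref{eq:opt-clique}), not $\sim\delta^{1/\verts(H)}\, np$ --- a clique of the latter size would incur entropy cost of order $n^2p^2\log(1/p)\gg a_{n,p}$ and ruin the bound.
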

	\noindent The stated lower bounds on the limits in \eqref{eq:8}-\eqref{eq:7} 
	are  immediate from \Cref{thm:regvar} and \Cref{thm:regprob}. We attain 
	the complementary upper bounds by planting cliques or a hub 
	according to the explicit optimal strategies $X_n^\star$ we use in \Cref{opt-reg}
	or those used in proving \eqref{eq:ervar}, as a 
	by product of which we further deduce that for any 
	$n^{-\frac{1}{\Delta}} \ll p := m/{n \choose 2} =o(1)$, 
	\begin{equation}\label{was-th:unifvar}
	\lim\limits_{n \rightarrow \infty} a_{n,p}^{-1} \Phi_n^{(m)}(H,1+\delta) = c(H,\delta) \,,
	\end{equation}
	for $c(H,\delta)$ given on the \abbr{rhs} of \eqref{eq:ervar}.
	\begin{remark} Having only the limiting upper tail rate, as in \Cref{th:correg+unif}, 
		is not enough for precise information about the law of the (rare) graphs $G_n$ for which 
		$\mathrm{hom}(H,G_n)$ exceeds its mean by factor $1+\delta$. Nevertheless, our results provide 
		additional evidence that such graphs be typically close to a sample from the original 
		ensemble with an added structure of suitable $o(n)$-size that mimic the explicit 
		optimizers we use in the proof of that theorem.
	\end{remark}

	Next, consider the inhomogeneous \abbr{ER} setting, where given  
	probability vectors $\{\bm{\alpha}^{(n)}\}$ of length $\ell$ each,
	the vertices of $G_n=G_n^{[\ell]}$ are split to $\ell$ blocks, having sizes $\alpha^{(n)}_r n$ for $1 \le r \le \ell$,
	and the edges between vertices within the $r$-th and $r'$-th blocks are formed 
	independently, with probability $c^{(n)}_{r r'} \, p$. Assuming that $\{ c_{r r'}^{(n)} \}$ are 
	uniformly bounded, $\{ \alpha^{(n)}_1 \}$ and $\{ c_{1 1}^{(n)} \}$ are bounded away from zero, 
     while $p=p(n)\rightarrow 0$ at a suitable rate, we  
	denote by $\bP^{[\ell]}$ the law of the resulting random graph $G_n^{[\ell]}$, 
	parameterized by the symmetric $n \times n$ matrix $\bm{p}=(p_{ij})$ of entry values
	$\{ c^{(n)}_{r r'} \, p(n) \}$ as above, and 
	for $X \in \XX_n$, set
	\[
	I_{\bm{p}} (X) :=\sum_{1 \le i \ne j \le n} I_{p_{ij}} (X_{ij}) \,.
	\]
	Analogously to \Cref{thm:regprob}, we next show 
	that the upper tail event for $G_n^{[\ell]}$ is characterized by 
	\begin{equation}\label{dfn:Phi-s}
	\Phi_{n,\bm{p}}^{[\ell]}(H,t) := \frac{1}{2}\inf \{I_{\bm{p}}(X) :  \quad
	X \in \XX_n, \quad  \mathrm{hom}(H,X)\geq t \, b_H
	\}.
	\end{equation}
    The constant  $b_H = b_H^{(n)} \sim \mathbb{E}^{[\ell]} [\, \mathrm{hom}(H,G_n) \,]$
	denotes the following sum over partitions $\{S_r\}$ of $V(H)$ 
	to $\ell$ parts (possibly empty), 
	\begin{equation}\label{dfn:kappa-H}
	b_H \coloneqq \sum_{\{S_r\}} \;\; \prod_{r=1}^\ell \alpha_r^{|S_r|} 
	\prod_{1 \le r \leq r' \le \ell} c_{r r'}^{\edges(H[S_r \rightarrow S_{r'}])} \,,
	\end{equation}
     where $\edges(H[S_r \rightarrow S_{r'}])$ count edges between blocks $S_r$ and $S_{r'}$ 
     (and $\{\alpha^{(n)}_r,c^{(n)}_{r,r'}\}$ may depend on $n$).
	\begin{propo}\label{th:sbmprob}
		For fixed $t>1$, a graph $H$ of maximal degree $\Delta \geq 2$
		and $p=p(n)$ of \eqref{eq:p-range}, 
		\begin{equation}\label{eq:sbm1}
		a_{n,p}^{-1}\log\bP^{[\ell]}(\mathrm{hom}(H,G^{[\ell]}_n) \geq t \, b_H ) \leq -\phi_{n,\bm{p}}^{[\ell]}(H, t-o(1))+o(1).
		\end{equation}
	For $1 \gg p \gg n^{-1/(2\Delta_\star(H)} (\log n)^{1/(2\Delta_\star(H)}$ we have in addition that 
	 	\begin{equation}\label{eq:sbm2}
		a_{n,p}^{-1}\log\bP^{[\ell]}(\mathrm{hom}(H,G^{[\ell]}_n) \geq t \, b_H ) \geq -\phi_{n,\bm{p}}^{[\ell]}
		(H, t+o(1))-o(1).
		\end{equation}
	\end{propo}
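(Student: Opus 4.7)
\textbf{Proof proposal for \Cref{th:sbmprob}.}
The plan is to handle the two inequalities separately, mirroring the sparse \abbr{ER} template while working with the inhomogeneous product Bernoulli reference measure $\bP^{[\ell]}$ (and hence the edge-weighted relative entropy $I_{\bm p}$) throughout. The critical enabling observation is that, under the stated hypotheses on $\{\alpha^{(n)}_r,c^{(n)}_{rr'}\}$, one has $p_{ij} \asymp p$ uniformly in $(i,j)$, so every moment, entropy and polynomial concentration estimate developed in the \abbr{ER} case survives the entrywise replacement of $p$ by $p_{ij}$, the losses being constant factors that are absorbed into the $o(1)$ and $o(a_{n,p})$ slacks present in the statement.

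For the upper bound \eqref{eq:sbm1}, I would apply the nonlinear large-deviation framework of \cite{cod} (or equivalently \cite{augeri}) to $F(X)=\mathrm{hom}(H,X)$ on $\{0,1\}^{n\times n}$ under $\bP^{[\ell]}$; alternatively one may invoke Austin's inequality \cite{aus}, which is already formulated in the product-space (hence inhomogeneous) setting. The ingredients needed---moment control of $F(G_n^{[\ell]})$ and a covering/net bound on the gradient $\nabla F$---depend on the reference measure only through the order of magnitude of its marginals, so they extend directly within the sparsity range \eqref{eq:p-range}. The output is precisely the variational upper bound in \eqref{eq:sbm1}.

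For the lower bound \eqref{eq:sbm2}, pick a near-optimizer $X^\star \in \XX_n$ of $\Phi^{[\ell]}_{n,\bm p}(H, t+o(1))$ and let $\bP_{X^\star}$ denote the independent Bernoulli$(X^\star_{ij})$ law. The standard change-of-measure/data-processing inequality yields, for any event $A$ with $\bP_{X^\star}(A)\to 1$,
\[
-\log \bP^{[\ell]}(A) \;\leq\; H(\bP_{X^\star}\,\|\,\bP^{[\ell]})(1+o(1)) \;=\; \tfrac{1}{2}I_{\bm p}(X^\star)(1+o(1)) \;\sim\; \Phi^{[\ell]}_{n,\bm p}(H, t+o(1)).
\]
Taking $A=\{\mathrm{hom}(H,G_n) \geq t\, b_H\}$, since $\bE_{X^\star}[\mathrm{hom}(H,G_n)]=\mathrm{hom}(H, X^\star)\geq (t+o(1)) b_H$, the verification $\bP_{X^\star}(A)\to 1$ reduces to polynomial concentration of $\mathrm{hom}(H,G_n)$ under the planted law, which follows from Kim--Vu-type bounds \cite{kimvu} valid precisely in the sharper range $p\gg n^{-1/(2\Delta_\star)}(\log n)^{1/(2\Delta_\star)}$ advertised for \eqref{eq:sbm2}.

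The main obstacle I anticipate is the concentration step for $\mathrm{hom}(H,G_n)$ under the planted law $\bP_{X^\star}$: a typical near-optimizer $X^\star$ places a small but dense planted structure (clique or hub) on top of the ambient inhomogeneous bulk, and one must verify that Kim--Vu concentration remains sharp around the inflated mean $\mathrm{hom}(H,X^\star)$. This should be addressable by decomposing the contribution of the planted region from that of the bulk and applying polynomial concentration to each piece while tracking the $p_{ij}$-dependence. The verification of the \cite{cod,augeri} complexity bounds in the inhomogeneous setting (for \eqref{eq:sbm1}) is comparatively routine, essentially amounting to the substitution $p\mapsto p_{ij}$ throughout combined with $p_{ij}\asymp p$.
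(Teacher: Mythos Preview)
Your upper-bound strategy is essentially what the paper does: apply the covering framework of \cite{cod} (the analogue of \eqref{eq:ip2}) under $\bP^{[\ell]}$, using the same net and exceptional sets as in the homogeneous case while noting that the required probability bounds (on $\EE_0$ and on the events $\LL_F$) depend only on the order of magnitude of the edge marginals. The paper isolates the two estimates that need to be re-verified (Hilbert--Schmidt/operator-norm concentration for $X_n-p{\sf J}_n$ and an edge-count tail) in a short lemma, and these indeed go through by comparison with the homogeneous case.

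Your lower-bound strategy, however, differs from the paper's and carries a real difficulty you have not resolved. You propose tilting to the planted product law $\bP_{X^\star}$ and then invoking Kim--Vu to show $\bP_{X^\star}(\mathrm{hom}(H,G_n)\ge t\,b_H)\to 1$. The problem is that the inhomogeneous variational problem $\Phi^{[\ell]}_{n,\bm p}$ is \emph{not solved} in the paper (nor anywhere else), so you have no structural information about a near-optimizer $X^\star$. Kim--Vu requires control of the maximum expected partial derivatives of the homomorphism polynomial, and for a generic $X^\star\in\XX_n$ with $I_{\bm p}(X^\star)=O(a_{n,p})$ these can be large on a planted region of unknown shape. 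Your suggestion to ``decompose the planted region from the bulk'' presupposes exactly the clique/hub structure that is unavailable here.

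The paper avoids this entirely. It tilts only as far as an operator-norm ball: using the analogue of \cite[(6.5),(6.8)]{cod} (which extend verbatim since $p_{ij}\asymp p$) one has $\bP^{[\ell]}(\BB_X)\ge \exp(-\tfrac12 I_{\bm p}(X)-o(a_{n,p}))$ for \emph{every} $X\in\XX_n$, where $\BB_X=\{Z:\|Z-X\|_{\mathrm{op}}\le C_0\sqrt{n}\}$; this holds because matrix concentration for $\|G_n-\bE G_n\|_{\mathrm{op}}$ is structure-free. Then, after restricting (via the already-proved upper bound and the reasoning of \cite[(6.25)--(6.26)]{cod}) to near-optimizers $X$ whose ball $\BB_X$ meets $\bigcap_{F\prec H}\LL_F(\kappa_1)$, \cite[Prop.~3.7]{cod} yields a \emph{deterministic} inclusion $\BB_X\subset\{\mathrm{hom}(H,\cdot)\ge t\,b_H\}$ under the hypothesis $np^{2\Delta_\star}\gg 1$. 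Thus polynomial concentration under the planted law is replaced by operator-norm concentration plus deterministic operator-norm continuity of $\mathrm{hom}(H,\cdot)$, both of which are uniform in the (unknown) near-optimizer.
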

	\begin{remark} Additional work should yield that \eqref{eq:sbm2} holds 
	for $p(n)$ in the full range of \eqref{eq:p-range}.
	\end{remark}

	Turning to the joint upper tail for 
	the vector 
	$\mathrm{hom}(\underline{H},G) \coloneqq (\mathrm{hom}(H_1,G),\ldots, (\mathrm{hom}(H_\sk,G))$
	corresponding to a given collection $\underline{H} \coloneqq (H_1, \ldots, H_\sk)$ of connected graphs
	$\{H_i\}$, we endow $\R^\sk$ with the usual coordinate-wise partial orders $\ge$ and $>$. 
	As we show next, for \abbr{ER}-model $\GG(n,p)$, whose law we denote hereafter by 
	$\bP_p$, the rate function at 
	$\uu{t} = (t_1,\ldots,t_\sk) > {\bf 1} := (1,\ldots,1) \in \R_+^\sk$ is then 
	\begin{equation}\label{eq:multvar}
	\Phi^{\sk}_{n,p}(\underline{H}, \underline{t}) := \frac{1}{2}\inf \{I_p(X) :  \quad 
	X\in \XX_n, \quad \mathrm{hom}(\underline{H},X) \geq \underline{t} \}
	\end{equation}
	(compare with \eqref{eq:0} which corresponds to $\sk=1$).
	\begin{propo}\label{th:multprob}
		For $\sk' \ge 1$ let $\Delta := \min_{j \in [\sk']} \{ \Delta(H_j) \} \ge 2$ denote the minimal value 
		among the maximal degrees of given connected graphs $\{H_j, j \in [\sk']\}$. Assume 
		\abbr{wlog} that $\Delta(H_i) = \Delta$ iff $i \in [\sk]$ for some $\sk \in [\sk']$
		and set $\phi_{n,p}^{\sk} (\cdot) := a_{n,p}^{-1} \Phi_{n,p}^{\sk} (\cdot)$ for 
		such $\sk$ and the scaling $a_{n,p}$ induced by $\Delta$.
		Then, for any $\underline{t} \in [1,\infty)^{\sk'}$ and $p=p(n)$ 
		in the intersection of ranges \eqref{eq:p-range} applicable to $H_i$, $i \in [\sk]$,
		\begin{equation}\label{eq:multprob}
		a_{n,p}^{-1}\log\bP_p(\mathrm{hom}(\underline{H},G_n) \geq \underline{t}) \leq
		-\phi^\sk_{n,p}(\pi_k(\uu{H}),\pi_\sk(\uu{t})-o(1)) +o(1),
		\end{equation}
		where $\pi_\sk$ denotes the restriction to the first $\sk$ coordinates (both for $\uu{H}$ and
		on $[1,\infty)^{\sk'}$).
	\end{propo}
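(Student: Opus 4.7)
The plan has three parts: first reduce to the case $\sk' = \sk$ by monotonicity, then linearize the joint constraint and apply the single-graph upper tail bound to each linear combination, and finally identify the resulting sup--inf with the joint variational problem $\Phi_{n,p}^\sk$.

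\textbf{Reduction.} Since the event $\{\mathrm{hom}(\uu{H},G_n) \ge \uu{t}\}$ is contained in $\{\mathrm{hom}(\pi_\sk(\uu{H}), G_n) \ge \pi_\sk(\uu{t})\}$, monotonicity of probability gives
\[
\bP_p(\mathrm{hom}(\uu{H},G_n) \ge \uu{t}) \le \bP_p(\mathrm{hom}(\pi_\sk(\uu{H}),G_n) \ge \pi_\sk(\uu{t})),
\]
so it suffices to prove the bound when $\sk'=\sk$, in which case all graphs $H_1,\dots,H_\sk$ share the common maximal degree $\Delta$ matching the scale $a_{n,p} = n^2 p^\Delta \log(1/p)$ that drives the nonlinear \abbr{LDP} of \cite{cod,augeri}.

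\textbf{Linearization and single-constraint bound.} Fix small $\eta>0$ and let $\Lambda_\eta$ be an $\eta$-net of the probability simplex $\{\lambda \in \R_+^\sk : \sum_j \lambda_j = 1\}$, of size polynomial in $\eta^{-1}$. For each $\lambda \in \Lambda_\eta$ the scalar polynomial $f_\lambda(X) := \sum_{j=1}^\sk \lambda_j \mathrm{hom}(H_j, X)$ inherits from each $\mathrm{hom}(H_j,\cdot)$ the gradient-complexity hypothesis at scale $a_{n,p}$ (all $H_j$ have max degree $\Delta$, and the $\lambda_j$ are bounded). Since $\{\mathrm{hom}(\pi_\sk \uu{H}, G_n) \ge \pi_\sk(\uu{t})\} \subseteq \{f_\lambda(G_n) \ge f_\lambda(\pi_\sk(\uu{t}))\}$ for every $\lambda$, applying the single-graph upper bound to each $f_\lambda$ and union-bounding over $\Lambda_\eta$ (whose log-cardinality is $o(a_{n,p})$) yields
\[
\log \bP_p(\mathrm{hom}(\pi_\sk \uu{H}, G_n) \ge \pi_\sk(\uu{t})) \le -\sup_{\lambda \in \Lambda_\eta} \inf\{I_p(X) : f_\lambda(X) \ge f_\lambda(\pi_\sk(\uu{t})) - o(1)\} + o(a_{n,p}).
\]

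\textbf{Matching the joint variational problem; main obstacle.} It remains to verify that this sup--inf matches $2\Phi_{n,p}^\sk(\pi_\sk \uu{H}, \pi_\sk \uu{t})$ up to $o(a_{n,p})$-error. The easy inequality $\sup_\lambda \inf_X \le \inf_X\{I_p(X) : \mathrm{hom}(H_j, X) \ge t_j \; \forall j\}$ is immediate by restricting to the jointly-feasible set; the reverse, which is what is actually needed, is a strong-duality statement that is not automatic, since $\mathrm{hom}(H_j, \cdot)$ is not convex on $\XX_n$. This is the principal obstacle. I plan to circumvent standard convex analysis by inspecting near-optimizers of $\Phi_{n,p}^\sk$: as suggested by \Cref{rmk:clique+hub} and exposed in the proof of \Cref{thm:regvar}, up to $o(a_{n,p})$-error the joint infimum is attained on matrices $X^\star \in \XX_n$ carrying an explicit planted clique+hub structure on $o(n)$ vertices. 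For such structured $X^\star$ one can arrange all $\sk$ individual constraints $\mathrm{hom}(H_j, X^\star) \ge t_j$ to be simultaneously binding at the $a_{n,p}$-scale, whence a suitably chosen $\lambda^\star \in \Lambda_\eta$ renders the linearized infimum for $f_{\lambda^\star}$ tight and equal to the joint one. Letting $\eta \downarrow 0$ at an appropriate rate as $n \to \infty$ then delivers the claimed inequality.
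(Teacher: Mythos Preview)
Your reduction to $\sk'=\sk$ is fine, but the linearization route has a genuine gap that your proposed workaround does not close. After bounding $\log\bP_p$ by $-\sup_{\lambda}\inf_X\{I_p(X): f_\lambda(X)\ge f_\lambda(\uu t)\}$, weak duality only gives $\sup_\lambda\inf_X\le \Phi^{\sk}_{n,p}$, so your bound is in the \emph{wrong} direction: it is weaker than the target $-\Phi^{\sk}_{n,p}$, not stronger. To recover \eqref{eq:multprob} you need the reverse inequality, i.e.\ no asymptotic duality gap. Your fix --- that the joint near-optimizer $X^\star$ has clique$+$hub structure with all constraints binding --- does not yield this: even if every constraint is active at $X^\star$, the linearized feasible set $\{f_{\lambda^\star}(X)\ge f_{\lambda^\star}(\uu t)\}$ is strictly larger than the joint one, and nothing prevents some other $X$ (not of clique$+$hub form) from having smaller $I_p$ there. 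Binding constraints at the primal optimum is a necessary condition for zero gap, not a sufficient one, and the functionals $\mathrm{hom}(H_j,\cdot)$ are genuinely non-convex on $\XX_n$. There is also no reason to expect the limiting rate $c(\uu H,\uu\delta)$ of \Cref{th:multvar} to be convex in $\uu\delta$ (already for $\sk=1$ it is a minimum of two concave branches), so the issue does not disappear at the $a_{n,p}$ scale either. A secondary point: the single-constraint upper bound you invoke for $f_\lambda$ is not a black-box consequence of the cited results, which are stated for $\mathrm{hom}(H,\cdot)$, not for arbitrary positive combinations; and your ``union bound over $\Lambda_\eta$'' is a misnomer, since the joint event is contained in every linearized event and one simply takes the minimum.

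The paper avoids the duality issue entirely. The key observation is that the non-asymptotic covering bound \eqref{eq:ip2} is already stated for an $\R^\sk$-valued $h$, with fluctuation measured in $\|\cdot\|_\infty$; hence one can take $h(\cdot)=\mathrm{hom}(\uu H,\cdot)$ directly. The covers $\{B_i\}$ built in \cite{cod} for a single graph control the oscillation of each $\mathrm{hom}(H_j,\cdot)$ separately (via Schatten norms for cycles, via $\|\cdot\|_{\mathrm{op}}$ in general, cf.\ \eqref{eq:bd-fluct}), so the \emph{same} cover works simultaneously for all $j\in[\sk]$ once one intersects the exceptional sets. The output of \eqref{eq:ip2} is then directly the joint infimum $\Phi^{\sk}_{n,p}$, with no linearization and no duality step.
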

	\noindent We complement \Cref{th:multprob} by the following explicit solution of
	the variational problem \eqref{eq:multvar}.
	\begin{propo}\label{th:multvar}
		Fix $\sk \ge 1$, $s \ge 0$ and suppose the 
		connected graphs $\{H_i, i \in [\sk]\}$ have the \emph{same} maximal degrees
		$\Delta(H_i)=\Delta \geq 2$ and $H_i$ is $\Delta$-regular iff $i \le s$. Then,
		for any $\uu{\delta} \in \R_+^\sk$ and $n^{-1/\Delta} \ll p=o(1)$ 
		\begin{align}\label{eq:multans}
		\lim_{n \rightarrow \infty} \phi^\sk_{n,p}(\underline{H}, {\bf 1} + \underline{\delta}) 
		&= c(\uu{H},\uu{\delta}) \nonumber \\
		& \coloneqq \min_{x,y \geq 0} \{ x + \frac{1}{2} y^2 :  \;
		\sfP_{H^{\star}_{i}}(x)+ \ind_{\{i \le s\}} y^{\verts(H^{\star}_{i})} \geq 1+ \delta_{i}, \;  i \le \sk \}.
		\end{align}
	\end{propo}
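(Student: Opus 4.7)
My plan is to prove the claim by matching upper and lower bounds on $\phi^\sk_{n,p}(\uu H, {\bf 1}+\uu\delta)$, viewing \eqref{eq:multans} as the joint analog of \eqref{eq:ervar} that simultaneously admits hub- and clique-type plantings.

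\emph{Upper bound.} Let $(x_\star, y_\star) \in \R_+^2$ attain the minimum on the \abbr{rhs} of \eqref{eq:multans}. Pick disjoint $S_1, S_2 \subset [n]$ of sizes $s_1 = \lfloor x_\star n p^{\Delta} \rfloor$ and $s_2 = \lfloor y_\star n p^{\Delta/2} \rfloor$, and define $X^\star_n \in \XX_n$ with zero diagonal, $X^\star_n(i,j) = 1$ whenever $\{i,j\} \cap S_1 \ne \emptyset$ or $\{i,j\} \subseteq S_2$, and $X^\star_n(i,j) = p$ otherwise. Counting the modified entries and using $I_p(1) \sim \log(1/p)$ gives $I_p(X^\star_n) \sim (2 s_1 n + s_2^2)\log(1/p) \sim (2 x_\star + y_\star^2) a_{n,p}$. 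For the hom count, group $\phi : V(H_i) \to [n]$ by the partition $(A, B, C) := (\phi^{-1}(S_1), \phi^{-1}(S_2), \phi^{-1}([n] \setminus (S_1 \cup S_2)))$ of $V(H_i)$. Each partition's normalized contribution equals $x_\star^{|A|} y_\star^{|B|} p^{\gamma(A,B)}$, with $\gamma(A,B) := \Delta|A| + \Delta|B|/2 - e_A - e_B - e_{AB} - e_{AC}$. Using $\sum_{v \in A} \deg_{H_i}(v) = 2e_A + e_{AB} + e_{AC} \le \Delta|A|$, with equality iff $A \subseteq V(H_i^\star)$, and $2e_B + e_{AB} + e_{BC} \le \Delta|B|$, one checks $\gamma(A,B) \ge e_A \ge 0$, with equality iff $A$ is an independent subset of $V(H_i^\star)$ and $e_{AB} = e_{BC} = 0$ (which by connectedness forces $B \in \{\emptyset, V(H_i)\}$, and the case $B = V(H_i)$ arises only if $H_i$ is $\Delta$-regular). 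Summing these surviving terms yields $\mathrm{hom}(H_i, X^\star_n) \sim \sfP_{H^\star_i}(x_\star) + \ind_{\{i \le s\}} y_\star^{\verts(H^\star_i)} \ge 1+\delta_i$, so $\phi^\sk_{n,p} \le c(\uu H, \uu\delta) + o(1)$.

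\emph{Lower bound.} Adapt the single-graph argument of \cite{bglz} that established \eqref{eq:ervar}, to handle multiple simultaneous constraints. Given $X \in \XX_n$ feasible for \eqref{eq:multvar} with $I_p(X)/a_{n,p} = O(1)$, extract a ``hub set'' $S_1(X) \subset [n]$ of rows whose accumulated entropy excess over the baseline $p$ exceeds a suitable threshold, together with a ``clique set'' $S_2(X)$ obtained from a small block where $X$ is predominantly close to $1$. Setting $\tilde x := |S_1(X)|/(np^\Delta)$, $\tilde y := |S_2(X)|/(np^{\Delta/2})$, a decomposition of the entropy gives $I_p(X)/a_{n,p} \ge 2\tilde x + \tilde y^2 - o(1)$, while the matching upper bound $\mathrm{hom}(H_i, X) \le \sfP_{H^\star_i}(\tilde x) + \ind_{\{i \le s\}} \tilde y^{\verts(H^\star_i)} + o(1)$ follows from the same partition expansion as in the upper bound, with entries outside the hub rows and the clique block controlled by their baseline value $\sim p$. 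Feasibility of $X$ then forces $(\tilde x, \tilde y)$ to satisfy the constraints of \eqref{eq:multans}, whence $\phi^\sk_{n,p} \ge c(\uu H, \uu\delta) - o(1)$.

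The principal obstacle is to construct, from an arbitrary feasible $X$, a \emph{single} decomposition $(S_1(X), S_2(X))$ that simultaneously captures the leading-order contribution to $\mathrm{hom}(H_i, X)$ for every $i \in [\sk]$. In the single-graph setting of \cite{bglz} the decomposition may be tailored to $H$; in the joint setting one instead needs a common structural decomposition compatible with both the entropy cost $\tilde x + \tilde y^2/2$ and the full joint constraint system, which requires ruling out hybrid near-optimizers that attempt to plant distinct small structures optimized separately for different $H_i$.
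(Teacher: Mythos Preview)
Your upper bound is essentially the paper's (\Cref{opt-mult}): plant a hub of size $\sim x\, n p^{\Delta}$ together with a clique of size $\sim y\, n p^{\Delta/2}$, compute the entropy, and verify each hom constraint via the partition expansion you describe.

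For the lower bound your sketch deviates from the paper and leaves a real gap. You propose extracting a hub set $S_1(X)$ and a ``clique set'' $S_2(X)$ and taking $\tilde x,\tilde y$ to be their normalized \emph{cardinalities}. But an arbitrary near-optimal $X$ need not contain any block on which the entries are close to~$1$, so $S_2(X)$ is ill-defined as stated, and the claimed entropy bound $I_p(X)/a_{n,p}\ge 2\tilde x+\tilde y^2-o(1)$ does not follow from set sizes alone. The paper instead follows \cite{bglz}: write $W=p+U$ with $U\ge 0$ (\abbr{wlog}, as $I_p$ is non-increasing on $[0,p]$), fix a threshold $b\to 0$ slowly, set $\BB_b:=\{x:\int_0^1 U(x,y)\,dy\ge b\}$, and define
\[
x_b:=p^{-\Delta}\iint_{\BB_b\times\bar\BB_b}U^2,\qquad
y_b^2:=p^{-\Delta}\iint_{\bar\BB_b\times\bar\BB_b}U^2 \, .
\]
The entropy bound $\tfrac12 I_p(p+U)\ge(1-o(1))(x_b+\tfrac12 y_b^2)\,p^{\Delta}I_p(1)$ is then immediate from the pointwise inequality \eqref{eq:50}, and the constraint
$\sfP_{H^\star}(x_b)+\ind_{\{H\ \Delta\text{-regular}\}}\,y_b^{\verts(H)}\ge 1+\delta-o(1)$
is quoted directly from \cite[(6.7)--(6.8)]{bglz}. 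This also dissolves the ``principal obstacle'' you flag at the end: the decomposition $(\BB_b,x_b,y_b)$ depends only on $U$ and $b$, not on $H$, so the \emph{same} pair $(x_b,y_b)$ automatically satisfies the constraint for every $H_i$ simultaneously --- there are no hybrid near-optimizers to rule out.
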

	\begin{remark}\label{rmk:clique+hub}
		On the \abbr{rhs} of  \eqref{eq:multans} we have the normalized size of a planted 
		hub $(=x)$ and a planted clique $(=y)$, in the limiting $n \to \infty$ solution of \eqref{eq:multvar}.  
		As shown in \cite{bglz}, for $\sk=1$ such optimum is always attained for $x=0$ or $y=0$, 
		yielding \eqref{eq:ervar}. In contrast,  for $\sk \ge 2$ the optimum in general 
		has both $x>0$ and $y>0$, corresponding as mentioned before, 
		to simultaneously planting both a clique and a hub (for example, for $H_1=\sfK_3$ and $H_2=\sfK_{1,2}$ we 
		are to minimize $x+ \frac{1}{2} y^2$ in \eqref{eq:multans} subject to 
		$1+3x+y^3 \geq 1+\delta_1$ and $1+ x \geq 1+ \delta_2$. 
		The latter constraint rules out $x=0$, and for $\delta_1 - 3 \delta_2 > 27/8$, 
		taking $(x,y)=(\delta_2,(\delta_1-3 \delta_2)^{1/3})$ is better than 
		the hub solution $(x,y)=(\delta_1/3,0)$).
	\end{remark}
	
	\noindent
	Building on Propositions \ref{th:multprob} and \ref{th:multvar} we establish
	the following sharp joint upper tail asymptotic.
	\begin{thm}\label{prop:jut}
		With $c(\underline{H},\underline{\delta})$ given by  \eqref{eq:multans}, we have
		in the setting of \Cref{th:multprob},  that 
		\begin{equation}\label{eq:ut-joint-er}
		\lim_{n \rightarrow \infty} a_{n,p}^{-1} \log\bP_p(\mathrm{Hom}(\underline{H},G_n) \geq {\bf 1}+\underline{\delta}) 
		= - c(\pi_\sk(\underline{H}),\pi_\sk(\underline{\delta}))\,.
		\end{equation}
		Further, the same applies for the law $\bP^{(m)}$ of 
		the uniformly random  graph
		$G_n^{(m)}$.
	\end{thm}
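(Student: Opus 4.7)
The plan is to combine the upper bound from \Cref{th:multprob} with a matching lower bound obtained via explicit planting, using \Cref{th:multvar} to identify the limiting rate, and then to transfer the result from $\bP_p$ to $\bP^{(m)}$ by a standard conditioning argument.

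For the upper bound under $\bP_p$, the inclusion $\{\mathrm{hom}(\uu{H},G_n)\ge \mathbf{1}+\uu{\delta}\}\subseteq \{\mathrm{hom}(\pi_\sk(\uu{H}),G_n)\ge \pi_\sk(\mathbf{1}+\uu{\delta})\}$ combined with \Cref{th:multprob} yields
\[
\limsup_{n\to\infty}a_{n,p}^{-1}\log\bP_p(\mathrm{hom}(\uu{H},G_n)\ge \mathbf{1}+\uu{\delta})\le -\liminf_{n\to\infty}\phi^\sk_{n,p}\bigl(\pi_\sk(\uu{H}),\pi_\sk(\mathbf{1}+\uu{\delta})-o(1)\mathbf{1}\bigr).
\]
By monotonicity of $\phi^\sk_{n,p}(\uu{H},\cdot)$, for any $\varepsilon>0$ the $o(1)$ perturbation is eventually dominated by $\varepsilon\mathbf{1}$, and \Cref{th:multvar} identifies the limit at $\mathbf{1}+(\uu{\delta}-\varepsilon\mathbf{1})$ as $c(\pi_\sk(\uu{H}),\pi_\sk(\uu{\delta})-\varepsilon\mathbf{1})$. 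Letting $\varepsilon\downarrow 0$ and invoking continuity of $c(\uu{H},\cdot)$ in its second argument (elementary from \eqref{eq:multans}) gives the matching upper bound $-c(\pi_\sk(\uu{H}),\pi_\sk(\uu{\delta}))$.

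For the lower bound under $\bP_p$, let $(x^*,y^*)\in \R_+^2$ attain the infimum in \eqref{eq:multans} applied to $(\pi_\sk(\uu{H}),\pi_\sk(\uu{\delta}))$. Pick disjoint $S_1,S_2\subset [n]$ with $|S_1|=\lceil y^*np^{\Delta/2}\rceil$ and $|S_2|=\lceil x^*np^\Delta\rceil$, and let $E_n$ be the event that $G_n$ contains every pair inside $S_1$ and every pair between $S_2$ and $[n]\setminus S_2$. Since $I_p(1)\sim \log(1/p)$ under $p\to 0$, an elementary calculation gives
\[
-\log\bP_p(E_n)=\bigl(\tfrac{1}{2}(y^*)^2+x^*\bigr)\,a_{n,p}\,(1+o(1))=c(\pi_\sk(\uu{H}),\pi_\sk(\uu{\delta}))\,a_{n,p}\,(1+o(1)).
\]
Conditional on $E_n$, the planted structure contributes (in the normalized scale) an additive term $\sfP_{H_i^*}(x^*)-1+\mathbf{1}_{\{i\le s\}}(y^*)^{\verts(H_i^*)}\ge \delta_i$ to $\mathrm{hom}(H_i,G_n)$ for each $i\in[\sk]$, by the binding inequality at the optimum; for $i>\sk$ (so $\Delta(H_i)>\Delta$) the same contribution is larger by a factor $p^{-\Omega(1)}\to\infty$, so those constraints hold automatically. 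A Janson-type second-moment bound on the unplanted random part controls the typical background count of $\mathrm{hom}(H_i,\cdot)$ and its lower-order fluctuations, ensuring $E_n\cap\{\mathrm{hom}(\uu{H},G_n)\ge\mathbf{1}+\uu{\delta}\}$ carries essentially all of $\bP_p(E_n)$.

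To pass to $\bP^{(m)}$, set $p=m/\binom{n}{2}$ so that $\bP^{(m)}(\cdot)=\bP_p(\cdot\,|\,|E(G_n)|=m)$, and note the local central limit theorem gives $-\log\bP_p(|E(G_n)|=m)=O(\log n)=o(a_{n,p})$ in our range of $p$. The bound $\bP^{(m)}(A)\le \bP_p(A)/\bP_p(|E|=m)$ transfers the \abbr{ER} upper bound with negligible correction. For the matching lower bound, since the planting forces only $m_{\mathrm{plant}}=O(n^2p^\Delta)=o(m)$ edges, modify $E_n$ by distributing the remaining $m-m_{\mathrm{plant}}$ edges uniformly among the unplanted pairs; the $\bP^{(m)}$-probability of this refined event is then at least $\bP_p(E_n\cap\{|E|=m\})/\bP_p(|E|=m)=\exp(-(c+o(1))a_{n,p})$ by the same local CLT, and the background-count argument applies verbatim. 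The main obstacle is the second-moment step: because the constraints at the optimum may be tight, one must show that lower-order fluctuations of $\mathrm{hom}(H_i,G_n)$ on the unplanted background do not spoil the binding inequality — this is handled by the chi-square concentration estimates already developed in the \abbr{ER} literature for the $\sk=1$ case.
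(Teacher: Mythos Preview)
Your upper bound is exactly the paper's. For the lower bound, the planting idea is also the paper's, but there is a real gap precisely where you flag the ``main obstacle'': when the optimizer $(x^*,y^*)$ makes a constraint in \eqref{eq:multans} bind, the planted excess for that $H_i$ is only $\delta_i+o(1)$, and no concentration estimate on the background will force $\mathrm{hom}(H_i,G_n)\ge 1+\delta_i$ with probability $1-o(1)$; at best you get probability bounded away from zero, and even that requires ruling out a deterministic $o(1)$ deficit (the plant occupies $O(n^2p^\Delta)$ pairs, biasing the background contribution below $1$). Your appeal to ``chi-square concentration estimates already developed \ldots for the $\sk=1$ case'' does not close this. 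The paper's fix is the standard one: prove the lower bound at threshold $\mathbf{1}+\uu{\delta'}$ for arbitrary $\uu{\delta'}<\uu{\delta}$, plant according to $(x,y)$ satisfying \eqref{eq:xy-hom} for $\uu{\delta}$ so each constraint holds with a fixed positive margin, apply Janson's lower-tail inequality \cite[Theorem~3]{ltail} to show the background exceeds $1-\vep$ with probability $1-e^{-\Theta(n^2p)}$ (which dominates $e^{-O(a_{n,p})}$), and conclude via the continuity of $c(\uu{H},\cdot)$ that you already invoked for the upper bound.

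The paper's mechanics for the lower bound also differ from yours in a way worth noting. Rather than plant an event $E_n$ under $\bP_p$ and then condition on $\{|E(G_n)|=m\}$ via a local CLT, the paper changes measure to the inhomogeneous \abbr{ER} law $\bP_\star$ with block edge-matrix $X_n^\star\in\XX_n^{(m)}$ of \eqref{eq:clhub}, the background value $q$ being chosen so the total edge weight is exactly $m$. On $\KK_n^{(m)}$ the Radon--Nikodym derivative $d\bP_p/d\bP_\star$ is then the deterministic constant $e^{-I_p(X_n^\star)/2}$, so a single estimate of $\bP_p(\{\mathrm{hom}(\uu{H},G_n)\ge\mathbf{1}+\uu{\delta'}\}\cap\KK_n^{(m)})$ lower-bounds both the $\bP_p$ and the $\bP^{(m)}$ probabilities simultaneously. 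This sidesteps your separate local-CLT step and reuses verbatim the machinery already set up for \Cref{th:correg+unif}; your route would also go through once the slack is inserted, but the paper's is cleaner.
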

	
	\begin{remark} We believe that	the analog of \eqref{eq:ut-joint-er}
	holds for $G_n^d$, provided $c(\uu{H},\uu{\delta})$ of 
		\eqref{eq:multans} is replaced by $\max_{i=1}^{\sk} \{ c^d(H_i,\delta_i) \}$. Indeed, 
		our proof of \Cref{thm:regvar} extends to show that if in addition 
		$\Delta(H_i)=\Delta \ge 2$ for all $i \le \sk$,  then for any $\uu{\delta}$, $p$ as
		in \Cref{th:multvar},
		\begin{equation}\label{eq:multcycreg}
		\lim\limits_{n \rightarrow \infty} a_{n,p}^{-1} \, \inf \{ \frac{1}{2} I_p(X) : \;
		X\in \XX^d_n, \;\; \mathrm{hom}(\underline{H},X) \geq {\bf 1} + \underline{\delta} \} 
		= \max_{i=1}^\sk \{ c^d(H_i,\delta_i) \} \,.
		\end{equation}
	\end{remark}
	
	The bulk of this paper is \Cref{sec:unif}, where we settle \Cref{thm:regvar},  \Cref{thm:regprob} and 
	\Cref{th:correg+unif} on the upper tail problem for random $d$-regular and uniformly random graphs.
    The short \Cref{sec:sbm} then establishes \Cref{th:sbmprob} about the inhomogeneous random graph 
    $G_n^{[\ell]}$, while \Cref{sec:joint} deals with joint homomorphism counts,
	proving Propositions \ref{th:multprob}-\ref{th:multvar} and \Cref{prop:jut}.

	\section{Uniform random and random regular graphs}\label{sec:unif}
	\subsection{Proof of \Cref{thm:regvar}}
	We start by exhibiting in {\bf Step 1}, ie. in
	\Cref{opt-reg}, 
	an optimal strategy 
	for $\Delta$-regular $H$, thereby upper bounding $\lim_n \phi_n^d(H,1+\delta)$. 
	The technically challenging lower bounds 
	are then separately proved when $\Delta=2$ (in {\bf Step 2}), and when $\Delta \ge 3$ (in {\bf Step 3}). 
	\begin{propo}\label{opt-reg}
		Fixing $\delta > 0$ and connected $\Delta$-regular $H$, $\Delta \ge 2$, if
		$p=\frac{d}{n} \to 0$, $d \to \infty$, then
		\begin{equation}\label{eq:regcyc}
		{\Phi^d_n(H,1+\delta)} \leq c^d(H,\delta) a_{n,p} (1+o(1)).
		\end{equation}
	\end{propo}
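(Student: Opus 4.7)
The plan is to exhibit explicit optimizers $X_n^\star \in \XX_n^d$ that realize the bound in each regime, by planting one or more disjoint cliques inside an otherwise near-constant matrix and tuning the remaining entries to enforce $d$-regularity exactly.

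In the case $\Delta = 2$ (so $H = \sfC_l$), partition $[n]$ into $\lfloor\delta\rfloor$ ``full clique'' blocks $C_1,\ldots,C_{\lfloor\delta\rfloor}$ of size $d+1$ each, one ``partial clique'' block $C_\star$ of size $s = \lceil y\, n p \rceil$ with $y$ chosen slightly above $\{\delta\}^{1/\verts}$, and the background $B=[n]\setminus(C_\star\cup\bigcup_{k\ge 1}C_k)$. Set $X_n^\star = 1$ off-diagonal inside each $C_k$ and inside $C_\star$; $X_n^\star = 0$ between any $C_k$ ($k \ge 1$) and the rest of $[n]$ (so each $C_k$ is an isolated $K_{d+1}$, automatically $d$-regular); $X_n^\star = q_\star := (d-s+1)/|B|$ between $C_\star$ and $B$; and $X_n^\star = q_B := (d-s\,q_\star)/(|B|-1)$ inside $B$, ensuring $X_n^\star \in \XX_n^d$. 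Here $q_\star \sim p(1-y)$ is a constant-factor perturbation of $p$, while $q_B = p(1+O(p))$. In the case $\Delta \ge 3$ with $\Delta$-regular $H$, plant a single clique $C_\star$ of size $s = \lceil y\, n\, p^{\Delta/2} \rceil$ with $y$ slightly above $\delta^{1/\verts}$; this fits since $s/d \sim y\, p^{\Delta/2-1} \to 0$, and complete to a $d$-regular matrix by assigning $q_\star = (d-s+1)/(n-s) \sim p$ outside the $C_\star \times C_\star$ block.

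The verification splits into a lower bound on $\mathrm{hom}(H, X_n^\star)$ and an upper bound on $I_p(X_n^\star)$. For the homomorphism count, bound from below by the contributions from maps $\phi : V(H) \to [n]$ whose image lies entirely inside a single block. Each full $C_k$ contributes $(d+1)^\verts(1+o(1))/(np)^\verts \to 1$; the partial clique $C_\star$ contributes $s^\verts(1+o(1))/(n^\verts p^\edges)$, which using $\edges = \Delta\verts/2$ equals $y^\verts \to \{\delta\}$ when $\Delta = 2$ and $y^\verts \to \delta$ when $\Delta \ge 3$; and the background $B$ contributes $(|B|/n)^\verts (q_B/p)^\edges \to 1$. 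Summing yields $\mathrm{hom}(H, X_n^\star) \ge 1 + \delta + o(1)$, upgraded to $\ge 1+\delta$ by inflating $y$ infinitesimally. For the entropy, the dominant contribution comes from the entries set to $1$ inside the planted cliques: using $I_p(1) = \log(1/p)(1+o(1))$ and counting the $1$-entries gives $[\lfloor\delta\rfloor d^2 + s^2]\log(1/p)(1+o(1)) \sim 2\,c^d(H,\delta)\,a_{n,p}$ when $\Delta = 2$, and $s^2 \log(1/p)(1+o(1)) \sim 2\,c^d(H,\delta)\,a_{n,p}$ when $\Delta \ge 3$, matching the claimed bound once divided by $2$.

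The main obstacle will be controlling the residual entropy cost of the degree-correcting entries outside the cliques, which is most delicate in the $\Delta = 2$ regime where $q_\star$ differs from $p$ by a $\Theta(p)$ amount (rather than being a higher-order perturbation). A Taylor expansion of $I_p(\cdot)$ around $p$ gives $I_p(q) = O((q-p)^2/p)$ for $q$ bounded away from $0$ and $1$, so the $O(s n) = O(n^2 p)$ entries at value $q_\star$ contribute in total $O(p)\cdot O(n^2 p) = O(n^2 p^2)$, while the $O(|B|^2)$ entries at value $q_B$ contribute even less; both are $o(n^2 p^2 \log(1/p)) = o(a_{n,p})$, with the saving margin being precisely the factor $\log(1/p)$ built into $a_{n,p}$. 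Analogous but smaller estimates handle the $\Delta \ge 3$ case, where both $q_\star$ and $q_B$ are higher-order perturbations of $p$. Room for the blocks is automatic since $\lfloor\delta\rfloor(d+1)+s=O(np)\ll n$, so $|B|\sim n$. Combining the two bounds shows that $X_n^\star$ witnesses $\Phi_n^d(H, 1+\delta) \le c^d(H,\delta)\,a_{n,p}(1+o(1))$, as required.
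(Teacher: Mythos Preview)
Your approach is essentially the same as the paper's: plant disjoint cliques (maximal ones of size $d+1$ plus one partial clique) and tune the remaining entries to enforce $d$-regularity, then bound the homomorphism count from below by single-block contributions and the entropy from above by separating the ${\bf 1}$-entries from the near-$p$ background.

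One slip to fix: in your $\Delta\ge 3$ construction you write ``assign $q_\star=(d-s+1)/(n-s)$ outside the $C_\star\times C_\star$ block,'' but a single constant value outside the clique cannot satisfy the row-sum constraint for both $C_\star$-vertices and $B$-vertices simultaneously (unless $s=1$). You need two values, exactly as you do in the $\Delta=2$ case and as your later entropy discussion (``both $q_\star$ and $q_B$ are higher-order perturbations'') implicitly assumes: set $q_\star=(d-s+1)/(n-s)$ between $C_\star$ and $B$, and $q_B=(d-sq_\star)/(|B|-1)$ inside $B$. Also, in the $\Delta=2$ case do not forget to account for the $O(n^2p)$ zero-entries between the full cliques $C_k$ and the rest; their cost $I_p(0)\sim p$ fits the same $O(n^2p^2)=o(a_{n,p})$ estimate. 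Finally, your Taylor bound $I_p(q)=O((q-p)^2/p)$ holds when $q/p$ is bounded away from $0$ (not ``$q$ bounded away from $0$''), which is what you actually need for $q_\star\sim(1-y)p$.
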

	\begin{proof} Consider first $\Delta=2$, for which $H=\sfC_l$ must be a cycle of length $l \ge 3$.
		In this case, our candidate for \eqref{eq:2} is the block adjacency matrix 
		$X_n^\star \in \XX_n$, of the form
		
		\begin{equation}\label{eq:opt-cyc}
		X_n^\star := 
		\begin{bmatrix}
		\textbf{1} & \mathbf{0} & 
		\mathbf{0} & \cdots & \mathbf{0} \\ 
		\textbf{0} & \mathbf{1}
		& \mathbf{0} 
	    & \cdots & \mathbf{0} \\
		\cdots & \cdots & \cdots 
		& \cdots & \cdots \\
		\mathbf{0} & \mathbf{0} & \cdots 
		&\textbf{1} & \textbf{r}\\
		\mathbf{0} & \mathbf{0}
		&\cdots &\textbf{r} & \textbf{q}
		\end{bmatrix}\,,
		\end{equation}
		where we have $\lceil \delta \rceil$ principal blocks of ones, denoted by \textbf{1},  
		the first $\lfloor \delta \rfloor$ of which are of the maximal size $d+1$ each, 
		while the last block is of a size $s_1$ such that $s_1 \sim \{\delta\}^{1/l} d$. Here $\textbf{q}$ denotes a block matrix with zeros on the diagonal and $q$ on the off-diagonals, and $\textbf{r}$ denotes the matrix with
			all elements $r$.
		Setting $s:=\lfloor \delta \rfloor (d+1) + s_1$,
		the row-sum constraint of $\XX_n^d$ is satisfied by $X_n^\star$, provided 
		$r$ and $q$ are such that
		\begin{align*}
		d= s_1 - 1 + (n- s) r =  r s_1  + q (n- s-1) \,.
		\end{align*}
		Since $d=np$, this results with 
		\begin{equation}\label{eq:r-q-val}
		r=\frac{np-s_1+1}{n- s}\,,  \qquad \qquad q=\frac{np- s_1 r}{n - s-1} \,.
		\end{equation} 
		As $p=p(n) \to 0$, it follows from \eqref{eq:r-q-val} that eventually $r \le p$ and 
		furthermore $q/p \rightarrow 1$. 
		We denote the homomorphism density of $H=([\verts(H)],E)$ in $X \in \XX_n$, by
		\begin{equation}\label{eq:thg}
		t(H,X) \coloneqq n^{-\verts(H)} \textrm{ Hom}(H,X) = n^{-\verts(H)} \sum_{\substack{1 \leq i_1, \cdots i_\verts(H) \leq n}}
		\prod_{(k,l) \in E} x_{i_k,i_l}.
		\end{equation}
		Now, with $\edges(\sfC_l)=\verts(\sfC_l)$, upon   
		considering only contributions when all vertices of $\sfC_l$
		are in the same principal block of $X^\star_n$,  we find that
		\begin{align}\label{eq:tH-cyc}
		t(\sfC_l,X^\star_n) & \geq \lfloor \delta \rfloor \Big( \frac{d}{n} \Big)^{\verts(\sfC_l)} + \Big(\frac{s_1-1}{n}\Big)^{\verts(\sfC_l)} + 
		\Big( \frac{n-s-1}{n}\Big)^{\verts(\sfC_l)} q^{\edges(\sfC_l)} \nonumber \\
		& \sim (\lfloor \delta \rfloor + \{\delta\})p^{\verts(\sfC_l)}+p^{\edges(\sfC_l)} = (1+\delta)p^{\edges(\sfC_l)},
		\end{align}
		as required in \eqref{eq:2}.
		As for the entropy of $X_n^\star$, clearly
		\begin{equation}\label{eq:13}
		I_p(X_n^\star) \le (\lfloor \delta \rfloor + \{\delta\}^{2/l}) n^2 p^2 I_p(1) + \lfloor \delta \rfloor
		n^2 p I_p(0)  + 2 n s_1 I_p(r) +n^2 I_p(q) \,.
		\end{equation}
		With $I_p(1) = \log (1/p)$, the first term on the \abbr{rhs} is precisely $2 c^d(H,\delta) a_{n,p}$ 
		(see the \abbr{rhs} of \eqref{eq:regclq} for $\Delta=2$). Since $I_p(0)= 
		o(p \log(1/p))$, the second term on the \abbr{rhs} is $o(a_{n,p})$. Recall that eventually $r \le p$,
		hence $I_p(r) \le I_p(0)$
		and with $s_1 \le np$, the third term is similarly $o(a_{n,p})$. As for the last term, note that 
		$I'_p(x)= \log \Big\{ \frac{x(1-p)}{p(1-x)} \Big\}$ is uniformly bounded over 
		$x \in [p/2,2p]$ and $p \le 1/3$. Further, from  \eqref{eq:r-q-val} we have that 
		\begin{equation}\label{eq:q-p-err}
		q-p= \frac{p (s-1)  - r s_1}{n-s-1}= O(p^\Delta)\,.
		\end{equation}
		In particular, as $p \to 0$, eventually $q \in [p/2,2p]$. With $I_p(p)=0$, we then have that 
		\begin{equation}\label{eq:qpbound}
		I_p(q) \leq |q-p| \sup_{x\in [\frac{1}{2}p,2p]} |I'_p(x)| = O(p^\Delta) = o(p^\Delta \log(1/p))     
		\end{equation}
		so the last term on the \abbr{rhs} of \eqref{eq:13} is also $o(a_{n,p})$.
		
		\medskip
		In case $\Delta \geq 3$ it suffices to plant a single clique. Specifically, consider 
		$X^\star_n$ as in \eqref{eq:opt-cyc}, except for having now \emph{only its single, last
			block of ones}, namely, set an integer $s=s_1 \sim \delta^{1/\verts(H)} n p^{\Delta/2}$ and
		\begin{equation}\label{eq:opt-clique}
		X_n^\star = 
		\begin{bmatrix}
		\textbf{1} & \mathbf{r} \\ 
		\textbf{r} & \mathbf{q} 
		\end{bmatrix} \,.
		\end{equation}
		Here $\Delta/2>1$ and $p \to 0$, so $s=s_1 = o(d)$ 
		regardless of the fixed value of $\delta$, allowing us to
		set $r,q \in [0,1]$ per \eqref{eq:r-q-val}, provided $p$ is small enough,
		to guarantee that $X_n^\star \in \XX^d_n$. 
		In contrast, for $\Delta=2$ this would have given $s=O(d)$, hence the need for 
		the more elaborate construction \eqref{eq:opt-cyc}.
		 Next, here $\edges(H)=\Delta \verts(H)/2$ and   
		considering contributions when all vertices of $H$
		are in the same principal block of $X^\star_n$,  we find similarly to \eqref{eq:tH-cyc}, that 
		\begin{align}\label{eq:exess-clique}
		t(H,X^\star_n)  \geq & \Big(\frac{s_1-1}{n}\Big)^{\verts(H)} + 
		\Big( \frac{n-s-1}{n}\Big)^{\verts(H)} q^{\edges(H)}  \nonumber \\
		& \sim
		\delta p^{\Delta \verts(H)/2}+p^{\edges(H)} = (1+\delta)p^{\edges(H)},
		\end{align}
		as required in \eqref{eq:2}. Further, similarly to \eqref{eq:13}, we now have  
		\[
		I_p(X_n^\star) \le s_1^2 I_p(1) +  2 n s_1 I_p(r) +n^2 I_p(q) \,,
		\]
		where the first term on the \abbr{rhs} is $2 c^d(H,\delta) a_{n,p} (1+o(1))$. 
		It is easy to see that here
		\begin{equation}\label{eq:r-p-err}
		p-r =  \frac{(1-p)s_1-1}{n-s_1} = O(p^{\Delta/2}), \qquad   
		q-p = \frac{(p-r) s_1-1}{n-s_1-1} = O(p^{\Delta}) \,.
		\end{equation}
		With $q/p \to 1$ satisfying \eqref{eq:q-p-err}, as argued in case
		$\Delta=2$, here again $n^2 I_p(q) = O(n^2 p^\Delta) = o(a_{n,p})$. Now also
		$r/p \to 1$, so by the same reasoning $I_p(r) = O(p^{\Delta/2})$. With 
		the corresponding term in our bound on $I_p(X_n^\star)$ being $o(a_{n,p})$,
		this completes the proof of the proposition.
	\end{proof}
	
	\noindent
	{\bf Step 2} (Lower bound $\Delta =2$):  If $\Delta=2$ then $H=\sfC_l$ for some 
	$l \ge 3$, and our starting point in bounding below
	\begin{align*}
	\Phi^d_n(\sfC_l,1+\delta)= \frac{1}{2}\inf\{I_p(X), X\in \XX^d_n, \textrm{ hom}(\sfC_l,X)\geq 1+\delta\},
	\end{align*}
	is the inequality 
	\begin{equation}\label{eq:50}
	\begin{aligned}
	I_p(x) \geq (1-o(1)) (x-p)^2 I_p(1) \,, \qquad \forall x \in [0,1]
	\end{aligned}
	\end{equation}
	which applies for any $p=o(1)$. Indeed, we get \eqref{eq:50} by combining the elementary inequality
	$I_p(p-x) \geq I_p(p+x)$ for $0 \le x \le p \le \frac{1}{2}$ (cf. \cite[Lemma 3.3]{bg}), with the bound 
	\[
	\lim_{p \to 0} \inf_{x \in (0,1-p]} \Big\{ \frac{I_p(p+x)}{x^2 I_p(1)} \Big\} = 1 
	\]  
	of \cite[Corollary 3.5]{lz2}. Recall that $X\in \XX^d_n$ is symmetric, of non-negative entries with
	$\sum_{i=1}^n X_{ij} = d$ and $X_{jj}=0$ for all $j$. In particular, all eigenvalues 
	$\{\lambda_i\}$ of $X \in \XX^d_n$ are in $[-d,d]$, whereas with $d=np$ and 
	$a_{n,p} = n^2 p^2 I_p(1)$,  we deduce from
	\eqref{eq:50} that 
	\begin{equation}\label{eq:lbd-entropy}
	\begin{aligned}
	I_p(X) &\geq (1-o(1)) \sum\limits_{1 \le i \ne j \le n} (X_{i,j}-p)^2 I_p(1) \\
	&\ge (1-o(1)) \Big( \sum\limits_{i,j=1}^n 
	X_{i,j}^2 -d^2 \Big) I_p(1)  
	= (1-o(1))  a_{n,p}  \Big( \sum_{i=1}^{n} (\lambda_i/d)^2  - 1 \Big) \,.
	\end{aligned} 
	\end{equation}
	Further, $\mathrm{hom}(\sfC_l,X)= (n p)^{-l} \sum\limits_{i=1}^{n} \lambda^l_i$
	for any $X \in \XX_n$ and $l \ge 3$, so re-scaling 
	$\eta_i :=\lambda_i/(np)$, it follows from \eqref{eq:lbd-entropy} that 
	\begin{equation}\label{eq:basic-eig-bd}
	\begin{aligned}
	\phi^d_n(\sfC_l,1+\delta) & \geq \frac{1}{2}(1-o(1)) 
	\inf\{\sum\limits_{i=1}^{n} \eta^2_i-1\,  : \, |\eta_i| \leq 1, \sum\limits_{i=1}^{n} \eta^l_i \geq 1+\delta \}.
	\end{aligned} 
	\end{equation}
	The optimal $\{\eta_i\}$ in \eqref{eq:basic-eig-bd} are non-negative, so the desired bound 
	$\phi_n^d(\sfC_l,1+\delta) \ge (1-o(1)) c^d(\sfC_l,\delta)$ follows from considering our next lemma
	at $x_i = \eta_i^l \in [0,1]$, $\beta = 2/l$ and $\theta=1+\delta$.
	\begin{lemma}\label{th: realbd}
		For any $\beta \in (0,1)$ and $\uu{x} \in [0,1]^{\N}$, let $f_\beta(\uu{x}):= \sum_{i} x_i^{\beta}$.
		Then, for any $\theta \geq 0$,
		\[
		f_1(\uu{x}) \geq \theta \qquad \Longrightarrow \qquad 
		f_\beta(\uu{x}) \geq \lfloor \theta \rfloor + \{\theta\}^{\beta} \,.
		\]
	\end{lemma}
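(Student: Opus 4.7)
The plan is to identify the extremal minimizers of $f_\beta(\uu{x})$ subject to $f_1(\uu{x}) \ge \theta$, and show the infimum matches the claimed bound. Since $t \mapsto t^\beta$ is strictly increasing on $[0,1]$, if $f_1(\uu{x}) > \theta$ I may decrease positive entries (preserving $[0,1]$-valued) until $f_1(\uu{x}) = \theta$, while only decreasing $f_\beta(\uu{x})$; so it suffices to prove the bound under the equality $f_1(\uu{x}) = \theta$. Since $\theta$ is finite and $x_i \in [0,1]$, only countably many entries are nonzero; sorting and truncating to the $N$ largest entries $\uu{x}^{(N)}$ gives $f_1(\uu{x}^{(N)}) \uparrow \theta$ and $f_\beta(\uu{x}^{(N)}) \uparrow f_\beta(\uu{x})$, while the map $s \mapsto \lfloor s \rfloor + \{s\}^\beta$ is continuous on $[0,\infty)$ (the jump of $\lfloor \cdot \rfloor$ at integers is cancelled by $\{s\}^\beta \to 1$). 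It therefore suffices to establish the bound for finite sequences $\uu{x} \in [0,1]^n$ with $\sum_{i=1}^n x_i = \theta'$, for arbitrary finite $\theta' \ge 0$.

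For such finite sequences I run a smoothing step. Call $x_i \in (0,1)$ \emph{interior}; if two interior entries $x_i, x_j$ exist, set $c := x_i + x_j$ and replace the pair by $(0, c)$ if $c \le 1$, or by $(c-1, 1)$ if $c > 1$. Each substitution preserves both the sum and the $[0,1]$-constraint. On the feasible interval $\{t : t, c-t \in [0,1]\}$, the map $t \mapsto t^\beta + (c-t)^\beta$ is concave (since $\beta < 1$) and symmetric about $c/2$, so it attains its minimum at the interval's endpoints; the substitution therefore weakly decreases $f_\beta$ while strictly reducing the count of interior entries. After finitely many iterations I reach a terminal configuration with all entries in $\{0,1\}$ except possibly one entry $s \in [0,1]$.

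Denoting by $k$ the number of terminal entries equal to $1$, the constraint reads $k + s = \theta'$ and the objective is $f_\beta = k + s^\beta$. If $\theta'$ is an integer, then $s \in \{0,1\}$ and $f_\beta = \theta' = \lfloor \theta' \rfloor + \{\theta'\}^\beta$; otherwise $k = \lfloor \theta' \rfloor$ and $s = \{\theta'\}$, again giving $f_\beta = \lfloor \theta' \rfloor + \{\theta'\}^\beta$. Passing to the limit $N \to \infty$ in the truncation delivers the bound for the original $\uu{x}$. The main obstacle is merely keeping the smoothing bookkeeping clean and verifying the continuity needed for the truncation limit; no delicate analytic estimate is involved, since the inequality is essentially an extreme-point identification for the concave functional $f_\beta$ on the convex constraint set $\{\uu{x} \in [0,1]^n : f_1 = \theta'\}$.
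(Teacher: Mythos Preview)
Your proof is correct and follows essentially the same idea as the paper's: the infimum of the concave functional $f_\beta$ over the convex set $\{f_1=\theta'\}\cap[0,1]^n$ is attained at an extreme point, i.e., a configuration with at most one coordinate in $(0,1)$. The paper invokes this extreme-point principle in one line, whereas you realize it constructively via a two-entry smoothing and add an explicit truncation to pass from $[0,1]^{\mathbb N}$ to finite sequences---an extra care the paper's terse argument omits.
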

	\begin{proof} Since $f_\beta(\uu{x})$ is increasing in each coordinate, its infimum 
		over $K_{\ge \theta} := [0,1]^{\N} \cap \{ \uu{x} : f_1(\uu{x}) \ge \theta \}$, is attained at
		the convex set $K_{=\theta}$. Further, with $f_\beta(\cdot)$ a strictly concave function 
		(as $\beta \in (0,1)$), its infimum over $K_{=\theta}$ is attained at an extreme point of 
		$K_{=\theta}$, namely when all but at most one of the coordinates of $\uu{x}$ are 
		$\{0,1\}$-valued, and the stated lower bound immediately follows.
	\end{proof}
	
	\medskip
	\noindent {\bf Step 3} (Lower bound $\Delta \geq 3$):  We lower bound \eqref{eq:2} by 
	viewing $\XX_n^d$ as a subset of the collection $\WW$ of all \textit{graphons}
	(i.e. symmetric measurable $W:[0,1]^2 \rightarrow [0,1]$), via the map 
	$W_X(s,t):=X_{[ns],[nt]}$. Indeed, doing so yields the bound
	$\Phi^{d}_n(H,u) \ge  n^2 \Phi^{d}(H,u)$ for the continuous problem 
	\begin{align}\label{dfn:Phi-d-cont}
	\Phi^{d}(H,u) := \frac{1}{2} \inf \{ I_p(W) :  & \, W \in \mathcal{W}, \; p^{-\edges(H)} t(H,W) \geq u, \;
	 \nonumber \\
	& \int_{0}^{1} W(x,y) dy = p  \; \textrm{ }\forall x \in [0,1] \},
	\end{align}\label{dfn:dens-graphon}
	where  $I_p(W) := \iint I_p(W(t,s)) dt ds$ denotes the entropy of graphon $W$ and 
	\begin{equation}\label{dfn:t-graphon}
	t(H,W) \coloneqq \int_{[0,1]^{\verts(H)}} \prod_{(i,j)\in E(H)} W(x_i,x_j) \prod\limits_{i=1}^{\verts(H)} dx_i 
	\end{equation}
	its homomorphism density. Next, as in   
	\cite{bglz}, we change
	variables to $U \coloneqq W-p \in [-p,1-p]$, so our extra linear constraint translates to 
	\begin{equation}\label{eq:zero-deg}
	d(x) := \int_{0}^{1} U(x,y) dy = 0, \qquad  \forall x \in [0,1] \,.
	\end{equation}
	Using the standard notation $a_p \lesssim b_p$ whenever $a_p/b_p$ is bounded above as $p \rightarrow 0$, 
	in view of our scale $a_{n,p}$ it suffices for the lower bound on $\Phi^{d}(H,1+\delta)$
	to consider only $U$ such that
	\begin{equation}\label{eq:ipup}
	I_p(p+|U|) \leq I_p(p+U) \lesssim p^\Delta \log(1/p),
	\end{equation}
	with our next lemma thus the key 
	to the lower bound.
	\begin{lemma}\label{lemma: small}
		Suppose $H$ is a connected graph with maximal degree  $\Delta \geq 3$. If for $p \to 0$ the symmetric 
		$U:[0,1]^2 \rightarrow [-p,1-p]$ satisfy  \eqref{eq:zero-deg}, then 
		\begin{equation}\label{eq:remove-p}
		p^{-\edges(H)} t(H,p+U)= 1 + p^{-\edges(H)} t(H,U)+o(1)
		\end{equation}
	and for irregular $H$ also 
	\begin{equation}\label{eq:irregular-neg}
    p^{-\edges(H)} t(H,U)=o(1).
    \end{equation}
	\end{lemma}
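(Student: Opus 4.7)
The plan is to expand $\prod_{e \in E(H)}(p + U(x_e))$ multilinearly over $E(H)$ and integrate over $[0,1]^{|V(H)|}$, writing
\[
t(H, p+U) \;=\; \sum_{S \subseteq E(H)} p^{|E(H)| - |S|} \, J_S(U), \qquad J_S(U) := \int \prod_{e \in S} U(x_e) \prod_{v \in V(H)} dx_v,
\]
so that the extreme cases $S = \emptyset$ and $S = E(H)$ already furnish $p^{|E(H)|}$ and $t(H, U)$. Then \eqref{eq:remove-p} reduces to showing $p^{-|S|} |J_S(U)| = o(1)$ for every $\emptyset \neq S \subsetneq E(H)$. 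The first reduction uses the zero-row-sum property \eqref{eq:zero-deg}: if the support subgraph $F_S = (V_S, S)$ has some vertex $v$ of $S$-degree one, then isolating the unique $U$-factor at $v$ and integrating $dx_v$ yields zero. Hence only $S$ for which every $v \in V_S$ has $\deg_S(v) \geq 2$ can contribute.

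For such $S$, I extract the $L^2$-bound $\|U\|_2^2 \lesssim p^{\Delta}$ from the entropy hypothesis \eqref{eq:ipup} via the pointwise estimate \eqref{eq:50}. Combined with $|U| \leq 1$, which gives $\|U\|_r^r \leq \|U\|_2^2$ for every $r \geq 2$, this produces $\|U\|_\Delta \lesssim p$. Finner's inequality with uniform edge-weight $1/\Delta$---feasible since $\sum_{e \ni v} 1/\Delta = \deg_{F_S}(v)/\Delta \leq 1$---then supplies $|J_S(U)| \leq \|U\|_\Delta^{|S|} \lesssim p^{|S|}$. To upgrade this to $o(p^{|S|})$ when $S \subsetneq E(H)$, I use the connectedness of $H$: any $F_S$ of minimum degree at least two that is $\Delta$-regular on $V_S$ must have $V_S$ closed under $H$-neighbors among the $\Delta$-degree vertices of $H$, which forces (by connectedness) $V_S = V(H)$ and hence $S = E(H)$, contradicting $S \subsetneq E(H)$. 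So there exists some $v^\star \in V_S$ with $\deg_{F_S}(v^\star) < \Delta$, and shifting the Finner weights up at edges incident to $v^\star$ (permitted by the slack there) converts the strict degree deficit into a strict power gain $|J_S(U)| \lesssim p^{|S| + \eta}$ for some $\eta = \eta(F_S) > 0$.

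For \eqref{eq:irregular-neg}, the same argument applies directly to $S = E(H)$: irregularity of $H$ supplies a vertex of degree $< \Delta$ that provides the strict Finner improvement, yielding $|t(H, U)| = o(p^{|E(H)|})$. The hardest step of the whole proof is the \emph{critical} configurations in which the LP fractional-matching value of $F_S$ equals $|S|/\Delta$ even though $F_S$ is not $\Delta$-regular (as occurs e.g.\ for subdivisions of $\Delta$-regular graphs), making a single Finner inequality tight; these I handle by an iterative peeling procedure, integrating out a vertex of lowest $F_S$-degree to produce a reduced kernel $K$ that inherits the zero-row-sum property from $U$ and a quantitative $L^2$-improvement $\|K\|_2^2 \lesssim p^{2\Delta}$ via Cauchy--Schwarz, then iterating until a genuinely non-regular reduced graph is reached on which the Finner bound becomes strict.
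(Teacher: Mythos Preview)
Your expansion and the leaf-elimination step (if $F_S$ has a vertex of $S$-degree one, integrate that variable against $\int U(\cdot,y)\,dy=0$ to get $J_S=0$) are correct, and in fact cleaner than the paper's opening reduction. The Finner bound with weights summing to $\nu^*(F_S)$ also correctly disposes of every $F_S$ with $\nu^*(F_S)>|S|/\Delta$.

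The gap is in your handling of the critical case $\nu^*(F_S)=|S|/\Delta$ with $\min\deg_{F_S}\ge 2$ and $F_S$ not $\Delta$-regular. Take $\Delta=3$, let $H$ be $K_{2,3}$ (parts $\{a,b\}$ and $\{c,d,e\}$) with one extra edge $cd$, and set $S=E(K_{2,3})$; then $|S|=6$, $F_S$ has minimum degree $2$, and $\nu^*(K_{2,3})=2=|S|/\Delta$. Peeling a degree-$2$ vertex (say $c$) gives $K(a,b)=\int U(z,a)U(z,b)\,dz$ with $|K|\le 1$, $\|K\|_2^2\le\|U\|_2^4\lesssim p^{2\Delta}$, and indeed $K\mathbf{1}=0$. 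But in the reduced graph the hubs $a,b$ \emph{still have degree $3$} (one $K$-edge replaces one $U$-edge each), so the Finner LP value is unchanged and you recover only $|J_S|\lesssim p^{|S|}$. Peeling all three degree-$2$ vertices leaves $J_S=\int K(a,b)^3\,da\,db$, and $\int|K|^3\le\|K\|_2^2\lesssim p^{2\Delta}=p^{|S|}$; the zero-row-sum of $K$ does not help bound $\int K^3$. Your claimed ``$L^2$-improvement'' $\|K\|_2^2\lesssim p^{2\Delta}$ is exactly what two $U$-factors already contributed to the Finner product, so the iteration stalls at $p^{|S|}$ and never produces $o(p^{|S|})$.

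The ingredient you are missing, and which the paper supplies, is a \emph{quantitative} bound on the marginal coming from the full entropy hypothesis rather than just $\|U\|_2$: writing $d^+(x)=\int U^+(x,y)\,dy=d^-(x)\in[0,p]$ (equality by the zero-degree constraint), Jensen and the shape of $I_p$ give $\int d^+(x)^2\,dx\lesssim p^\Delta b$ for any $b\to 0$ slowly. The paper then invokes the structural result of \cite{bglz} that every $F\in\FF_H$ (i.e.\ $\tau(F)=\edges(F)/\Delta$) contains a subgraph $M$ whose components are paths and even cycles, with at least one genuine path $M_1$; bounding the endpoint variable of $M_1$ through $\|d^\pm\|_2$ rather than $\|U^\pm\|_2$ contributes the extra factor $b^{1/2}=o(1)$. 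Your argument uses the zero-degree condition only to annihilate leaves exactly, and otherwise relies solely on $\|U\|_2$; it never accesses the smallness of $\|d^\pm\|_2$ that the entropy bound provides beyond $\|U\|_2$, and that is precisely what the critical subgraphs require.
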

    \noindent For irregular $H$ we have from \Cref{lemma: small} that $t(H,p+U)= (1+o(1))p^{\edges(H)}$
    whenever $U$ satisfies \eqref{eq:zero-deg} and hence, that 
    $\Phi^d(H,1+\delta) \rightarrow \infty$ as $p \to 0$. It follows that $\Phi^d(H,1+\delta)/(p^\Delta \log(1/p)) \to \infty$, so with $\Phi^{d}_n(H,u) \ge  n^2 \Phi^{d}(H,u)$ and rescaling,
    we deduce that
     $\phi_n^d(H,1+\delta) \to \infty$ as $p(n) \to 0$ (with
    $\delta>0$ fixed). Turning to deal with $\Delta$-regular $H$, we denote	
	by $\| \cdot \|_q$ the $L^q([0,1]^2)$-norms and recall that 
	for $|U| \le 1$ and graph $F$ of maximal degree  $\Delta(F) \ge 2$, 
	the generalized H\"{o}lder's inequality of \cite[Theorem 2.1]{gholder}
	for $\verts(F)$ variables and power $\Delta(F)$ at each $e \in E(F)$, yields that 
	\begin{align}\label{eq:lbd3}
	|t(F,U)| \le \|U\|_{\Delta(F)}^{\edges(F)} \le \|U\|_2^{2 \edges(F)/\Delta(F)} 
	\end{align}
	(see also \cite[Corollary 3.2]{lz1}).
	Thus, combining \eqref{eq:lbd3}
	and \Cref{lemma: small} we see that for
	any $W=p+U$ which is relevant for the \abbr{rhs} of \eqref{dfn:Phi-d-cont} at $\theta=1+\delta$,  
	we must have
	\[
	\|U\|_2^2  \geq  |t(H,U)|^{\Delta/\edges(H)} 
	\ge \big[ (\delta-o(1)) p^{\edges(H)} \big]^{\Delta/\edges(H)}  \,.  
	\]
	This, together with  \eqref{eq:50},  having $a_{n,p}=n^2 p^{\Delta} I_p(1)$ and  
	$\Delta \verts(H) = 2 \edges(H)$ (for $\Delta$-regular $H$), yield the required lower bound 
	\[
	n^2 \Phi^{d}(H,1+\delta) \ge \frac{1}{2} \delta^{2/\verts(H)} 
	(1+o(1)) a_{n,p} \,.
	\]
	We thereby proceed to complete the proof of \Cref{thm:regvar}, by proving
	\Cref{lemma: small}.
	\begin{proof}[Proof of \Cref{lemma: small}] Our starting point is the decomposition \cite[(6.1)]{bglz},
		\begin{equation}\label{eq:expand}
		p^{-\edges(H)} t(H,W) - 1= p^{-\edges(H)} [t(H,p+U)-t(H,p)] =\sum_F \NN(F,H) p^{-\edges(F)} t(F,U),
		\end{equation}
		over non-empty sub-graphs $F \subseteq H$, upto isomorphism, with $\NN (F,H)$ counting the number of sub-graphs of $H$ isomorphic to $F$. Further, recall \cite[(4.5)]{bglz} that \eqref{eq:ipup} implies in turn 
		\begin{equation}\label{eq:u2}
		\|U\|_2^2 \,
		\lesssim p^\Delta \,,
		\end{equation}
		whereas for $\Delta \ge 3$ and $U \ge 0$ it is shown in \cite[Corollary 6.2]{bglz}
		that under 
		\eqref{eq:u2},
		any contribution to the \abbr{RHS} of \eqref{eq:expand} which is non-negligible 
		when $p \to 0$, must come from  
		\begin{equation}
		\FF_H \coloneqq \{F \subset H,  \;  \textrm{ with
			minimum vertex cover size } \;\; \tau(F)= \edges(F)/\Delta \},
		\end{equation}
		or from $F=H$ a $\Delta$-regular graph.
		We remove the restriction to $U\ge 0$,  by noting that for $\Delta \ge 3$ 
		the proof of \cite[Lemma 6.4]{bglz} applies to $|U|$ and since 
		$t(F,U) \le t(F,|U|)$, it follows that for $F \notin \FF_H$ which is not $\Delta$-regular,  
		\begin{equation}\label{eq:fh}
		t(F,U) = o(p^{\edges(F)} ) \,.
		\end{equation}
		We thus complete the proof of the lemma by showing that \eqref{eq:zero-deg} extends 
		the scope of \eqref{eq:fh} to every $F \in \FF_H$ which is not $\Delta$-regular. 
		Specifically, splitting $U=U^+-U^-$ to its positive  
		and negative parts $U^+ \in [0,1-p]$, $ U^- \in [0,p]$ induces the split 
		$d(x)=d^+(x) - d^-(x)$, where thanks to \eqref{eq:zero-deg},
		\[
		d^+(x) := \int_0^1 U^+(x,y) dy = d^-(x) :=  \int_0^1 U^-(x,y) dy \quad \textrm{ are in } \; 
		[0,p] \; \textrm{ for all }  x \in [0,1].
		\]
		In particular, by Jensen's inequality and \cite[Lemma 3.4]{lz2}, we have for $p \le p_o$ and 
		$b \in [p,1/3]$, that 
		\begin{equation}\label{eq:ipd}
		I_p(p+U) \geq I_p(p+|U|) \geq \int_0^1 I_p(p+2d^+(x))dx \geq \frac{I_p(p+2b)}{b^2} 
		\int_0^1 d^+(x)^2 dx \,.
		\end{equation}
		Recall that $I_p(p+2b) \sim 2b \log(2b/p)$ when $p^\alpha \le b \to 0$ with $\alpha<1$ fixed, 
		and deduce from  \eqref{eq:ipup} and \eqref{eq:ipd} that then
		\begin{equation}\label{eq:dbound}
		\int_{0}^{1} d^+(x)^2 dx \leq \frac{b^2 I_p(p+U)}{I_p(p+2b)} \lesssim p^\Delta b \,.
		\end{equation}
         Let $\SS \subset \{ \pm \}^{E(F)}$ enumerate those ${\bf s} \in \{ \pm \}^{E(F)}$, 
		with even number of minus entries. Then, setting  	
		\[
		{\bf U^s}(\uu{x}|F) := \prod_{e=(e_1,e_2) \in E(F)} \, U^{s_e} (x_{e_1},x_{e_2}) \,,
		\]
		we have that for any graph $F$
		\begin{equation}
		t(F,U)= t(F,U^+-U^-) \leq \sum_{{\bf s} \in \SS} t(F,{\bf U^s}),
		\quad 
		t(F,{\bf U^s}) :=
		\int_{[0,1]^{\verts(F)}} {\bf U^s} (\uu{x}|F) \prod_{i=1}^{\verts(F)} dx_i \,.
		\end{equation}
		 Adapting \cite[Lemma 7.4]{bglz} to our setting, we next 
		show that  
		\begin{equation}\label{eq:t-F-small}
		t(F,{\bf U^s}) =o(p^{\edges(F)}) \,,
		\end{equation}
		for any ${\bf s} \in \{\pm\}^{E(F)}$ and every 
		connected irregular bipartite $F$ of maximal degree $\Delta \ge 3$ such that $\tau(F)=\edges(F)/\Delta$.
		Indeed, \cite[Lemma 7.1]{bglz} shows that such $F$ contains a sub-graph $M$ of $\edges(M) = 2\tau(F)$ 
		edges, whose connected components $M_1, \ldots, M_k$ are path or even cycles, 
		with at least $M_1$ being a path of length $l \ge 1$. Since $\Delta(M_i)=2$ and the 
		bound  \eqref{eq:u2} applies also for $U^\pm$, it follows by \eqref{eq:lbd3} that for any choice of ${\bf s}$,
		\[
		|t(M_i,{\bf U^s})|  \le \| U^\pm \|_2^{\edges(M_i)}
		\lesssim \big( p^{\Delta} \big)^{\edges(M_i)/2} \,, \qquad i=1,\ldots,k \,.
		\] 
		Clearly, then 
		\[
		t(F,{\bf U^s}) \le t(M,{\bf U^s}) = \prod_{i=1}^k t(M_i,{\bf U^s})  \lesssim p^{\Delta \edges(M)/2}
		= p^{\Delta \tau(F)} = p^{\edges(F)} \,.
		\]
		For $M_1
		$ a path of length $l = \edges(M_1) \ge 1$,  the 
		generalized H\"older's inequality yields the sharper bound 
		\begin{align*}
		t(M_1,{\bf U^s}) &= 
		\int_{[0,1]^l} d^{\pm}(x_2)dx_2  \prod_{i=2}^l U^{s_{(i,i+1)}} (x_i,x_{i+1}) dx_{i+1} \\
		& \leq \|d^{\pm}\|_2 \|U^{\pm}\|_2^{l-1} \lesssim
		(p^\Delta b)^{1/2} p^{\Delta (l-1)/2} =o(p^{\Delta l/2}) \,,
		\end{align*}
		where the last inequality uses \eqref{eq:u2} and \eqref{eq:dbound} with $b \to 0$. 
		This establishes \eqref{eq:t-F-small},
		and consequently also \eqref{eq:fh},
		 whenever the irregular bipartite $F$ 
		of maximal degree $\Delta \ge 3$ such that $\tau(F)=\edges(F)/\Delta$, is connected.
		In particular, this applies for any connected $F \in \FF_H$ which is not $\Delta$-regular
		(see \cite[Section 6.2]{bglz}). 
         For non-connected $F \in \FF_H$ we first integrate out all isolated vertices of $F$ 		
		without altering the value of $t(F,U)$, and complete the proof by noting that 
		thereafter each connected component $F'$ of $F$ 
		must be in $\FF_H$ (where obviously $F' \ne H$ can not be $\Delta$-regular).
		Indeed, the non-empty
	     independent sets $S$ of $H^\star$ are in one-to-one 
	     correspondence with $F \in \FF_H$ which consists
	     of all edges of $H$ incident to $S$ (so
	     vertices of $S$ have degree $\Delta$ in $F$). 
	     Having all isolated vertices removed from $F$, each connected component $F'$ of $F$
	     must consist of all edges of $H$ incident to some non-empty subset of $S$. Hence,
	     $F' \in \FF_H$ as claimed.
	\end{proof}

	\subsection{Proof of \Cref{thm:regprob}}
	Setting hereafter $n_\edges := {n \choose 2}$ and $\KK_n^{(m)} := \{ \edges(G_n) = m \}$,
	recall Pittel's inequality (cf. \cite[(1.6)]{jbook}), that for $p=m/n_\edges$,
	any $n$, $m$ and event $\AA_n$
\begin{equation}\label{eq:ubd-unif-Gn}
\bP_p(\AA_n \cap \KK^{(m)}_n) = \bP_p(\KK^{(m)}_n) \bP^{(m)}(\AA_n) \ge \frac{1}{3 \sqrt{m}} \, \bP^{(m)}(\AA_n)
\end{equation}
where as before, $\bP_p$ denotes the law of the \abbr{er}-model $\GG(n,p)$. 
Further, under
     \eqref{eq:p-range} we have that $a_{n,p} \gg \log n \ge \frac{1}{2} \log m$, so we 
     get the bound  \eqref{eq:3} by 
     combining \eqref{eq:ubd-unif-Gn} for $\AA_n=\{ \mathrm{hom}(H,G_n) \geq t \}$ 
     with \cite[Thm. 1.2 \& Thm. 1.3]{cod}
     (see \Cref{rem:samot} on its improvement when \cite[Thm. 1.5]{samotij} applies).
	
	A similar, but more delicate argument yields \eqref{eq:4}. Specifically, similarly to \cite{cod}, 
	we view the \abbr{ER}-law $\bP_p$ of
$A_{G_n} \in \XX_n$ as the product Bernoulli measure $\mu_p$
of a random binary vector $\uu{x} \in [0,1]^{n_\edges}$ (namely, the upper-triangular part
of $A_{G_n}$). Then, by an intersection with a given closed, convex $K \subset [0,1]^{n_\edges}$
one easily extends the non-asymptotic bound of  \cite[Corollary 2.2]{cod} 
to get for any $h: [0,1]^{n_\edges} \rightarrow \R^k$, $k \ge 1$, $\bm{p} \in [0,1]^{n_\edges}$, $t \in \R$ and 
$\delta>0$ that
\begin{equation}\label{eq:ip2}
\mu_{\bm{p}}(\{h \ge \uu{t} \} \cap K) \leq |\mathbb{I}| \exp\big(-\inf_{\stackrel{h(\uu{x}) \ge \uu{t}-\delta {\bf 1}}{\uu{x} \in K}} I_{\bm{p}}(\uu{x})\big) 	+ \mu_{\bm{p}}(\EE \cap K) \,,
\end{equation}
provided $K \cap \{0,1\}^{n_\edges} \setminus \EE$ is covered by a collection $\{B_i\}_{i \in \mathbb{I}}$
of closed convex subsets of $[0,1]^{n_\edges}$ and
\begin{equation}\label{eq:h1}
\max_{i 	\in \mathbb{I}} \; \sup_{\uu{x},\uu{y} \in B_i \cap K} \| h(\uu{x})-h(\uu{y})\|_\infty \leq \delta \,.
\end{equation}
Recall that for fixed $t>1$ and $p$ as in \eqref{eq:p-range}, one arrives, as in  \cite[Thm. 1.2 \& 1.3]{cod},
at
\begin{equation}\label{eq:codmain}
a_{n,p}^{-1}\log\bP_p(\mathrm{hom}(H,G_n) \geq t) \leq -\phi_{n,p}(H,t-o(1))+o(1) \,,
\end{equation}
by applying \eqref{eq:ip2} for $\bm{p}=p \bm{1}$, 
$\R_+$-valued $h(\cdot)=\mathrm{Hom}(H,\cdot)$
and $K=[0,1]^{n_\edges}$, after excluding a 
set $\EE=\EE_{\kappa,\delta}$ of $\bP_p$-probability 
$\exp(-\kappa a_{n,p})$ 
for arbitrarily large $\kappa$ (cf. \cite[(3.12),(4.5),(6.13),(6.23)]{cod}).
The bulk of the work there is a deterministic analysis to exhibit a cover of $\{0,1\}^{n_\edges} \setminus \EE$ by 
$\exp(o(a_{n,p}))$ many closed convex sets $\{B_i\}$ that satisfies \eqref{eq:h1} 
for arbitrarily small $\delta> 0$. The same reasoning, now with closed, convex $K_n$ 
which is the upper-triangular image of $\XX_n^d$ for $d=n p$, and using the same $\{B_i\}$, $\EE$ 
as in the proof of \cite[Thm. 1.1 \& 1.2]{cod}, yields that for $\KK^d_n := \{ A_{G_n} \in \XX_n^d \}$
and some $\kappa_n \uparrow \infty$,
 \begin{equation}\label{eq:codomain2}
a_{n,p}^{-1}\log\bP_p(\{\mathrm{hom}(H,G_n \geq t\} \cap \KK^d_n) \leq -(\phi^d_n (H,t-o(1))-o(1))\wedge \kappa_n \,
\end{equation}
(keeping the $\kappa_n$ term in the absence of a uniform upper bound such as \cite[(6.1)]{cod} 
on $\phi^d_n (H,t)$).
Hence, applying the well known identity
\begin{equation}\label{eq:11}
\bP_p(\AA_n \cap \KK^d_n) = \bP_p(\KK^d_n) \bP^{d}(\AA_n),
\end{equation}
in the special case of $\AA_n=\{\mathrm{hom}(H,G_n) \geq t\}$, we complete the proof of \eqref{eq:4} upon 
showing that for $d=n p$ and $p(n) \to 0$ of \eqref{eq:p-range},
\begin{equation}\label{eq:as-countreg}
a_{n,p}^{-1} \log \bP_p(\KK^d_n) \to 0 \,.
\end{equation}
To this end, with $g_n(\textbf{d})$ denoting the number of simple graphs $G_n$ 
of degrees $\textbf{d}=(d_1,d_2,\ldots,d_n)$, recall that the $\bP_p$-probability 
of producing a graph of such degrees is precisely
\[
g_n(\textbf{d}) \, p^{n \bar d/2} (1-p)^{n_\edges - n \bar d/2}  \,,
\]
where $\bar d: =n^{-1} \sum_{i=1}^{n} d_i$ (and hereafter $n \bar d$ assumed even). 
We thus establish \eqref{eq:as-countreg} by utilizing the asymptotic count of \cite[Corollary 1.5]{wormald},
\begin{equation}\label{eq:wormald}
g_n(\textbf{d})\sim \sqrt{2} \exp \big(\frac{1}{4}-\frac{\gamma^2}{4\mu^2(1-\mu)^2} \big) \left(\mu^{\mu} (1-\mu)^{(1-\mu)}\right)^{n_\edges} \prod_{i=1}^n \binom{n-1}{d_i} \,,
\end{equation}
where $\mu:=\bar d/(n-1)$, $\gamma:=(n-1)^{-2} \sum_{i=1}^{n}(d_i - \bar d)^2$, and
\eqref{eq:wormald} holds whenever $n (\bar d \wedge (n-1 - \bar d)) \to \infty$ and 
$n^{-\varepsilon} \max_j |d_j-\bar d|^2=o(\bar d \wedge (n-1-\bar d))$ for 
some fixed $\varepsilon >0$. In particular, this applies for $d_i = \bar d = n p$ 
and $n^{-1} \ll p \ll 1$ (as in \eqref{eq:p-range}), where $\mu = d/(n-1)$ and $\gamma=0$, 
resulting with 
\begin{align}\label{eq:countreg}
n^{-1} \log \bP_p(\KK^d_n) &\ge d \log \mu + (n-1-d) \log (1-\mu) + \log {n-1 \choose d} + o(1) 
\nonumber \\
& \sim - \frac{1}{2} \log d
\end{align}
(using Stirling's formula in the last step). With $d=np$ and $n \log (n p) = o(a_{n,p})$ in the 
range assumed in \eqref{eq:p-range}, this implies the limit \eqref{eq:as-countreg}, thereby
completing the proof. 
\qed
	
From \eqref{eq:wormald} we further deduce the following estimate, which we later use  
in proving \Cref{th:correg+unif}.
\begin{lemma}\label{lem:d-clique-d-reg}
For any $\vep>0$, $1 \gg p \ge \vep n^{-1/2}$, integer $d= n p$ and $n'/n \to 1$,  one has that 
\begin{equation}\label{eq:max-clique}
\frac{g_{n'-d-1}(d {\bf 1})}{g_{n'}(d {\bf 1})} \ge p^{d_\edges (1+o(1))} \,. 
\end{equation}
\end{lemma}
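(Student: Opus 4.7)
The plan is to compute the ratio directly from the asymptotic enumeration formula \eqref{eq:wormald}, which applies to both $g_{n'}(d\mathbf{1})$ and $g_{n'-d-1}(d\mathbf{1})$ in our regime. Since all prescribed degrees coincide, one has $\gamma = 0$, so Wormald's variance-type condition is automatic, while his growth condition $n(\bar d \wedge (n-1-\bar d))\to\infty$ reduces to $d = np \to \infty$, which holds under the standing assumption $p \ge \varepsilon/\sqrt n$ together with $p \to 0$. Writing $n'' := n'-d-1$, $\mu := d/(n'-1)$, $\mu' := d/(n''-1)$, and taking logs of the ratio after substituting \eqref{eq:wormald}, one obtains
\[
\log\frac{g_{n''}(d\mathbf{1})}{g_{n'}(d\mathbf{1})} = \bigl[n''_\edges H(\mu') - n'_\edges H(\mu)\bigr] + \bigl[n''\log\tbinom{n''-1}{d} - n'\log\tbinom{n'-1}{d}\bigr] + O(1),
\]
with $H(\mu) := \mu\log\mu + (1-\mu)\log(1-\mu)$ and $n''_\edges$, $n'_\edges$ the corresponding ${\cdot\choose 2}$ factors.

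Next I would estimate the two bracketed pieces by Taylor-expanding around $p$. Using $n_\edges \mu = nd/2$ together with $\log(1-\mu)\sim-\mu$ and $\mu' - \mu = O(p^2)$, the entropy bracket evaluates to $\frac{d(d+1)}{2}\log(1/p)(1+o(1)) + O(d^2)$, its leading contribution coming from the drop $(n' - n'')d/2 = d(d+1)/2$ in the coefficient of $\log\mu$. For the binomial bracket, Stirling gives $\log\binom{m-1}{d} = d\log(1/p_m) + O(d)$ with $p_m := d/(m-1) \sim p$, and differencing collapses the $\frac{1}{2}\log(\cdot)$ Stirling corrections to an $O(d\log d)$ contribution; the dominant piece $(n''-n')\,d\log(1/p) = -(d+1)d\log(1/p)(1+o(1))$ then yields an overall $-d^2\log(1/p)(1+o(1))$.

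Combining these gives
\[
\log\frac{g_{n''}(d\mathbf{1})}{g_{n'}(d\mathbf{1})} = \Bigl[\frac{d(d+1)}{2} - d^2\Bigr]\log(1/p)(1+o(1)) + O(d^2) = -\tbinom{d}{2}\log(1/p)(1+o(1)),
\]
since $\log(1/p)\to\infty$ absorbs the $O(d^2)$ residue into the $o(1)$ factor; exponentiating yields \eqref{eq:max-clique}. The main delicacy is that both brackets separately contain contributions of order $d^2\log(1/p)$ with opposite signs, so the coefficients $\frac{d(d+1)}{2}$ (entropy) and $d^2$ (binomial count) must be identified correctly for the difference to equal exactly $-\binom{d}{2}\log(1/p)$. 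All lower-order corrections from Stirling and from the Taylor expansions of $H$ and $\log\tbinom{m-1}{d}$ become negligible relative to this leading term precisely because $p\to 0$ forces $\log(1/p)\to\infty$.
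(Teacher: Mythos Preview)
Your proof is correct and follows essentially the same route as the paper: both apply the Wormald enumeration formula \eqref{eq:wormald} with $\gamma=0$, reduce via Stirling, and compute the difference $\log g_{n''}(d\mathbf{1})-\log g_{n'}(d\mathbf{1})$ to leading order $-\tfrac{d^2}{2}\log(1/p)(1+o(1))$. The only organizational difference is that the paper first collapses the entropy and binomial pieces into a single asymptotic $\tfrac{2}{n'}\log g_{n'}(d\mathbf{1})=d\log(n'-1)-d\log d+d-\log d+O(d^2/n)$ before differencing, whereas you keep the two brackets separate and difference each; this makes your cancellation of the two $\Theta(d^2\log(1/p))$ pieces more visible but is otherwise the same computation (note your binomial coefficient is really $d(d+1)$ rather than $d^2$, giving $-\binom{d+1}{2}$ rather than $-\binom{d}{2}$, but either equals $d_\edges(1+o(1))$).
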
	
\begin{proof} For $\mu=\frac{d}{n'-1} = O(p)$, we get from \eqref{eq:wormald} 
by Stirling's formula, similarly to \eqref{eq:countreg}, that 
\[
\frac{2}{n'} \log g_{n'} (d {\bf 1}) = d \log (n'-1) - d  \log d + d -  \log d + O\big( d^2 n^{-1} \big) \,.
\]
The same applies at $\widehat{n}=n'-d-1$,  resulting with 
\begin{align*}
\log g_{\widehat{n}} (d {\bf 1}) - \log g_{n'} (d {\bf 1}) &= \frac{d}{2} \Big[
\widehat{n} \log (\widehat{n}-1) - n' \log (n'-1) + d \log d  + O(d) \Big] \\
&=  d_{\edges} [ \log \mu  + O(1) ]  = d_{\edges} (1+o(1)) \log p 
\end{align*}
as claimed (using in the last step that $\log (1/\mu) = O(d)$ while $p \to 0$ and $n'/n \to 1$).
\end{proof}	

Next, we prove a combinatorial estimate which we later employ in proving \Cref{th:correg+unif}.
\begin{lemma}\label{lemma: sdkvcount}
	Fixing $\vep >0$, if $d=n p \ge 2$ and a collection $\sfE_n$ of edges
	on $[n]$ has maximal degree $s_1 < (1-\vep) d$, 
	then for any $uw \in \sfE_n$
	\begin{equation}
	\bP(\sfE_n \subseteq G_n^d) \geq 
	\vep^2 p (\vep - 7 p) \, \bP(\sfE_n \textbackslash \{uw\} \subseteq G_n^d)   \,.
	\end{equation}
\end{lemma}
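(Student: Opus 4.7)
The plan is to use a standard switching argument comparing the counts
$\mathcal{N}_1 := |\{G \in G_n^d : \sfE_n \subseteq G\}|$ and
$\mathcal{N}_2 := |\{G \in G_n^d : \sfE'_n \subseteq G,\, uw \notin G\}|$, where
$\sfE'_n := \sfE_n \setminus \{uw\}$. Since
$\bP(\sfE_n \subseteq G_n^d)/\bP(\sfE'_n \subseteq G_n^d) = \mathcal{N}_1/(\mathcal{N}_1 + \mathcal{N}_2)$
and I may assume \abbr{wlog} that $\mathcal{N}_1 \le \mathcal{N}_2$ (otherwise the claim is immediate),
the task reduces to showing $\mathcal{N}_1/\mathcal{N}_2 \gtrsim \vep^2 p(\vep - 7p)$.
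The switching to be used replaces a pair of edges $\{ua, wb\}$ by $\{uw, ab\}$, preserving
$d$-regularity and the $\sfE'_n$-content: for $G \in \mathcal{G}_2$, I would call
$(a,b) \in ([n] \setminus \{u, w\})^2$ with $a \neq b$ \emph{valid} when
$a \in N_G(u) \setminus N_{\sfE'_n}(u)$, $b \in N_G(w) \setminus N_{\sfE'_n}(w)$,
and $ab \notin E(G)$, so that the switched graph lies in $\mathcal{G}_1$.

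For the forward count $F(G)$ from $G \in \mathcal{G}_2$, the hypothesis
$s_1 < (1-\vep) d$ yields $|N_G(u) \setminus N_{\sfE'_n}(u)| \ge d - (s_1 - 1) > \vep d$
(and symmetrically at $w$), so the number of candidate pairs, before enforcing the constraints $ab \notin E(G)$ and $a \neq b$, is at least $(\vep d)^2$. The diagonal $a = b$ removes at most $d$ pairs, while the constraint $ab \in E(G)$ is violated by at most $(A_G^3)_{uw}$ pairs (the number of $3$-walks
$u \to a \to b \to w$ in $G$). Hence $F(G) \ge \vep^2 d^2 - (A_G^3)_{uw} - d$. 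The reverse
count from $G' \in \mathcal{G}_1$ is bounded trivially by $R(G') \le 2|E(G')| = nd$, so
the standard identity $\sum_{G \in \mathcal{G}_2} F(G) = \sum_{G' \in \mathcal{G}_1} R(G')$ gives
$nd\, \mathcal{N}_1 \ge \mathcal{N}_2(\vep^2 d^2 - d) - \sum_{G \in \mathcal{G}_2} (A_G^3)_{uw}$.

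The main obstacle will be bounding the $3$-walk sum: deterministically $(A_G^3)_{uw}$ may reach $d^2$,
erasing the $\vep^2 d^2$ main term, so only an averaged bound suffices. Expanding over
intermediate vertices,
\[
\sum_{G \in \mathcal{G}_2} (A_G^3)_{uw}
\le \sum_{x, y \in [n]} \big| \{G \in G_n^d : \sfE'_n \cup \{ux, xy, yw\} \subseteq G\} \big|,
\]
and I would bound each inner count by $O(p)$ times the count with one fewer extra edge via the analogous (reverse) switching, or equivalently via a configuration-model calculation with care taken for the simplicity conditioning when $d \gg 1$. Iterating three times and summing the $O(n^2)$ pairs $(x, y)$
produces $\sum_G (A_G^3)_{uw} \le C p d^2 (\mathcal{N}_1 + \mathcal{N}_2) \le 2 C p d^2 \mathcal{N}_2$.
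Plugging back gives $\mathcal{N}_1/\mathcal{N}_2 \ge p(\vep^2 - 2Cp - d^{-1})$;
since $d^{-1} = o(p)$ in the regime of \eqref{eq:p-range}, for $p$ small enough in terms of $\vep$
this exceeds $\vep^2 p(\vep - 7p)$, as needed. The technical crux is thus the $O(p)$-per-added-edge
decay of the graph count with an explicit constant, a separate but structurally similar switching argument.
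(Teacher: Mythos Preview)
Your approach uses the standard \emph{simple} (two-edge) switching $\{ua,wb\}\to\{uw,ab\}$, and you correctly identify that the obstruction is the term $(A_G^3)_{uw}$, which can be as large as $d^2$ and thus wipe out the main term $\vep^2 d^2$ for individual $G\in\GG_2$. Your proposed remedy---averaging over $\GG_2$ and invoking a separate ``$O(p)$-per-added-edge'' upper bound---is plausible, but it is left as a black box (``the technical crux''), and even granting it with some constant $C$ you only obtain $\NN_1/\NN_2\ge p(\vep^2-2Cp-d^{-1})$. This matches $\vep^2 p(\vep-7p)$ only for $p$ small and $d$ large, not for the full range $d\ge 2$ claimed; moreover the auxiliary lemma you need is essentially the reverse inequality to the one being proved, so the argument becomes two switching lemmas rather than one.

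The paper sidesteps the $(A_G^3)_{uw}$ issue entirely by using a \emph{three-edge} (six-cycle) switching: from $G\in\CC_1$ replace $\{uw,u_1w_1,u_2w_2\}$ by $\{wu_1,w_1u_2,w_2u\}$. The point is that in the reverse direction (from $\CC_0$ to $\CC_1$) one now chooses a path $u\leftarrow w_2\leftarrow u_2\leftarrow w_1\leftarrow u_1\leftarrow w$ of length five in $G'\setminus\sfE_n$, and the only non-edge requirements are on $u_1w_1$ and $u_2w_2$. Because $w_1,u_2$ are chosen last, one can \emph{deterministically} avoid the at most $2d$ neighbors of $u_1$ and of $w_2$, leaving $\ge d(\vep n-O(d))$ choices for the middle edge $w_1u_2$; the forward count is trivially $\le (nd)^2$. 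This yields $|\CC_1|/|\CC_0|\ge (d-s_1)(d-s_1-1)(\vep nd-6d^2)/(nd)^2$ directly, with no averaging and no auxiliary lemma, and gives the stated constant for all $d\ge 2$. In short, the longer switching buys exactly the deterministic control over the ``new edge already present'' constraint that your two-edge switching lacks.
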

\begin{proof} Let $\CC_1$ denote the collection of $d$-regular graphs 
	containing all of $\sfE_n$, with $\CC_0$ the collection of $d$-regular graphs 
	containing $\sfE_n \textbackslash \{uw\}$, but not the edge $uw$.  Since
	\begin{equation*}
	\bP(\sfE_n \subseteq G_n^d)= \frac{|\CC_1|}{|\CC_1|+|\CC_0|} \, \bP(\sfE_n \textbackslash \{uw\} \subseteq G_n^d) \,,
	\end{equation*}	
	it suffices to show that for all $n$ 
	\begin{equation}\label{eq:C1-C0-ratio}
	\frac{|\CC_1|}{|\CC_0|} \ge (d-s_1)(d-s_1-1) \frac{\vep nd - 6 d^2}{(nd)^2} \,.
	\end{equation}
	To this end, recall as in the proof of \cite[Lemma 2.3]{ksv}
	that for $G \in \CC_1$,  any pair of edges $u_i w_i \in G \setminus \sfE_n$, $i=1,2$,
	with disjoint vertices $\{u,w,u_i,w_i\}$, such that the triplet 
	$\sfS' := \{ w u_1, w_1 u_2, w_2 u \}$ is disjoint of $G$, defines 
	a forward switching, where replacing $\sfS:=\{uw,u_1w_1,u_2w_2\}$ by $\sfS'$ 
	results with $G' \in \CC_0$. Conversely, per $G' \in \CC_0$, any  
	disjoint $\{u,w,u_i,w_i\}$ such that $\sfS'$ is in $G' \setminus \sfE_n$ 
	while $\sfS$ is disjoint of $G'$, provides a reverse switching where replacing 
	$\sfS'$ by $\sfS$ recovers a graph $G \in \CC_1$.  A double counting argument 
	bounds $|\CC_1|/|\CC_0|$ below by the minimum over $G,G'$ of the 
	ratio between the number of reverse switching and the number of forward switching. 
	Counting edges \abbr{wlog} as oriented, a $d$-regular graph $G$ has at most $nd$ edges, 
	so the number of forward switching never exceeds $(n d)^2$. As for the reverse switching,
	given $u \ne w$, we have at least $d-s_1$ choices for $u_1 \notin \{u,w\}$ such that
	$w u_1 \in G' \setminus \sfE_n$ and $(d-s_1-1)$ choices of $w_2 \notin \{u,w,u_1\}$
	such that $w_2 u \in G' \setminus \sfE_n$. There are further at least $n-2-2d$ vertices beyond
	$\{w,u,u_1,w_2\}$ which are neither connected by $G'$ to $u_1$ nor to $w_2$. Within 
	those vertices there are at least $d (n- 2(2+2d))$ possible edges $w_1 u_2 \in G'$, at most 
	$n s_1 < (1-\vep) n d$ of which are from $\sfE_n$.  With the number of reverse switching per $G'$ thus being
	at least $(d-s_1)(d-s_1-1)d(\vep n-4-4d)$, we arrive at 
	\eqref{eq:C1-C0-ratio}, as claimed.
\end{proof}	
	
\subsection{Proof of \Cref{th:correg+unif}} Fix hereafter $H$ and $p(n)$ as in 
\Cref{thm:regprob} (replacing first $H$ by its $2$-core if considering \eqref{eq:8}).
The
stated lower bounds on the limits \eqref{eq:8} and \eqref{eq:7}, then follow by combining 
\Cref{thm:regprob}  with \Cref{thm:regvar} and \eqref{eq:ervar}, 
respectively. \par 
To prove \eqref{eq:8}, recall that $c^d(H,\delta)=\infty$ for irregular $H$, in which case the complementary upper bound in \eqref{eq:8} trivially holds. Further, in view of \eqref{eq:11}
we get the stated upper bound for connected $\Delta$-regular $H$, upon showing that for any $\delta' < \delta$ 
\begin{align}\label{eq:prob-d-lbd}
\liminf_{n \to \infty} a_{n,p}^{-1} \, \log \bP_p(\{\mathrm{hom}(H,G_n) \geq 1+\delta' \} \cap \KK^d_n) &\ge - c^d(H,\delta) \,,
\end{align}
where $\KK_n^d = \{ A_{G_n} \in \XX_n^d \}$. We derive \eqref{eq:prob-d-lbd} by a change of measure 
to a inhomogeneous \abbr{er}-model, denoted hereafter $\bP_\star$, where the  edge probabilities are set via $X_n^\star \in \XX_n^d$ of a block form. Specifically, we split the proof 
of \eqref{eq:prob-d-lbd} into {\bf Case 1} where $H=\sfC_l$, $\Delta=2$ with
$X_n^\star$ taken as in
 \eqref{eq:opt-cyc}, and {\bf Case 2} which corresponds to $\Delta$-regular 
$H$, $\Delta \ge 3$, using then $X_n^\star$ of \eqref{eq:opt-clique}. 
Denoting by $\sfE_n^\star$ the collection of edges within the ${\bf 1}$-blocks of 
$X_n^\star$ of \eqref{eq:opt-cyc} and \eqref{eq:opt-clique}, respectively, in both cases  
$\bP_\star$ is supported on the event
$\AA_n = \{\sfE^\star_n \subseteq G_n\}$. Further, thanks to the specific block structure 
of $X_n^\star$, the constraint $\AA_n \cap \KK_n^d$, imposes
the ${\bf 0}$-blocks (in case of \eqref{eq:opt-cyc}), and a
\emph{non-random} number of edges per block in $A_{G_n}$ 
(which matches that block's aggregate in $X_n^\star$). Thereby,
at every instance within $\AA_n \cap \KK_n^d$ we have the Radon-Nikodym derivative 
\begin{equation}\label{eq:rnd-val}
\frac{d\bP_p}{d\bP_\star} = e^{-\frac{1}{2} I_p(X_n^\star)} \,.
\end{equation}
While proving \Cref{opt-reg} we saw that $I_p(X_n^\star) \le 2 c^d(H,\delta) a_{n,p} 
+ o(a_{n,p})$ both for \eqref{eq:opt-cyc} ($\Delta=2$), and for \eqref{eq:opt-clique}
($\Delta \ge 3$). In conclusion, upon intersecting the event 
in \eqref{eq:prob-d-lbd} with $\AA_n$, followed by
such change of measure, it suffices to show that in both cases, 
\begin{equation}\label{eq:star-lln}
\bP_\star (\mathrm{hom}(H,G_n) < 1+\delta') \ll \bP_\star( \AA_n \cap \KK_n^d) = e^{- o(a_{n,p})},
\end{equation}
 and thereby having that
\[
a_{n,p}^{-1} \log \bP_\star (\{\mathrm{hom}(H,G_n) \geq 1+\delta'\} \cap \KK_n^d 
\cap \AA_n)\to 0 \,.
\]
Proceeding to establish \eqref{eq:star-lln},  denoting hereafter by $\wt{n}:= n-s$ 
the size of the (bottom) ${\bf q}$-block, recall that in both cases
$\wt{n}/n \to 1$ and $q/p \to 1$. In {\bf Case 1} the non-random
contribution to $\mathrm{hom}(H,G_n)$ from the planted cliques under $\bP_\star$ 
is $\delta (1+ o(1))$ (see \eqref{eq:tH-cyc}). The same applies for the single 
non-random clique planted in {\bf Case 2}, so in both cases
we have the following upper bound for any fixed $\vep < \delta-\delta'$ and $n$ large enough 
\begin{equation}\label{eq:move-to-q}
\bP_\star (\mathrm{hom}(H,G_n) < 1+\delta') \le \bP_q(\mathrm{hom}(H,G_{\wt{n}}) < 1-\vep) \,.
\end{equation}
Now, we want to invoke upper bound of \cite[Theorem 3]{ltail} on the lower tail for homomorphism counts to the \abbr{rhs} of  \eqref{eq:move-to-q}. To this end, note that $(4-\verts(J)) \edges(J) \le \verts(J)$ for any graph $J$ with at least one edge ($\edges(J) \le 3$ for $\verts(J)=3$, $\edges(J) \le 1$ for $\verts(J)=2$).  Thus, when
$\edges(J) \ge  1$ and $\Delta(J) \le \Delta$, the bound $2 \edges(J) \le \Delta \verts(J)$ 
implies that  $(\edges(J) -1) \le \Delta (\verts(J) -2)$. In particular, our condition
\eqref{eq:p-range} of $p \gg n^{-1/\Delta}$ implies that for any $J \subseteq H$ with $\edges(J) \ge 1$
\[
\bE_q [\mathrm{Hom}(J,G_{\wt{n}})] = \wt{n}^{\verts(J)} q^{\edges(J)} \ge (1+o(1)) n^2 p \,.
\]
Hence, by \cite[Theorem 3]{ltail}, for any such $J$, 
$q/p \to 1$, $p \gg n^{-1/\Delta}$ and all 
$n$ large enough, 
\begin{align}\label{eq:lbd-J-hom}
\bP_q(\mathrm{hom}(J,G_{\wt{n}}) < 1-\vep)
 &\le \exp \big(-\Theta(\vep^2 \min_{\Delta(J) \le \Delta, \edges(J) \ge 1} \{ n^{\verts(J)} p^{\edges(J)} \} ) \big) 
\nonumber \\
& \le \exp \big( - \Theta (\vep^2 n^2 p)) \ll \exp(- O(a_{n,p}) ) \,.
\end{align}
Considering $J=H$, it thus follows from \eqref{eq:move-to-q}-\eqref{eq:lbd-J-hom} that
\eqref{eq:star-lln} holds as soon as 
\begin{equation}\label{eq:star-regular}
a_{n,p}^{-1} \log \bP_\star( \AA_n \cap \KK_n^d) \to 0 \,.
\end{equation}
To this end, note that combining \eqref{eq:11} and \eqref{eq:rnd-val} we arrive at the identity
\begin{equation}\label{eq:ident-lbd}
\bP_\star(\AA_n \cap \KK_n^d) = e^{\frac{1}{2} I_p(X_n^\star)} \bP_p(\AA_n \cap \KK_n^d) 
= e^{\frac{1}{2} I_p(X_n^\star)} \bP_p(\KK_n^d) \bP(\sfE^\star_n \subseteq G_n^d)  \,.
\end{equation}
Recall that $a_{n,p} \, c^d(H,\delta) = |\sfE^\star_n| \log(1/p) (1+o(1))$. Further,  
while proving \Cref{opt-reg}  we saw that 
$a_{n,p}^{-1} I_p(X_n^\star) \to 2 c^d(H,\delta)$.
Thus, by \eqref{eq:as-countreg} and \eqref{eq:ident-lbd} we get \eqref{eq:star-regular} 
once we show that
\begin{equation}\label{eq:p-Es}
\bP(\sfE^\star_n \subseteq G_n^d) \ge p^{-|\sfE^\star_n| (1+o(1))}  \,.
\end{equation}
For {\bf Case 2} which has a single clique $\sfE^\star_n$ of size $s_1=o(d)$, 
we get \eqref{eq:p-Es} by sequentially peeling its $(s_1)_\edges$ edges
and iteratively employing \Cref{lemma: sdkvcount} for the relevant subsets of $\sfE^\star_n$. 
Turning to {\bf Case 1}, the event $\{\sfE^\star_n \subseteq G_n^d\}$ is then the intersection of 
$\lceil \delta \rceil$ \emph{independent} events. These amount to having 
$\lfloor \delta \rfloor$ disjoint maximal cliques of size $d+1$ each, 
and for $\{ \delta \} \in (0,1)$ having an additional clique of size 
$s_1 < (1-\vep(\delta)) d$ within the remaining ($d$-regular) graph $G^d_{\wt{n}+s_1}$.
Since $\vep^3 p = p^{1+o(1)}$, the contribution of the 
latter $s_1$-sized clique to $\bP(\sfE^\star_n \subseteq G_n^d)$ is likewise handled 
by $(s_1)_\edges$ applications of \Cref{lemma: sdkvcount}. Further, 
the contribution of maximal cliques to that probability, is  
precisely the product of the \abbr{lhs} of \eqref{eq:max-clique} at
$n'=n-(d+1) j$ for $0 \le j < \lfloor \delta \rfloor$, which for
$\wt{n}=n-s \gg d$ yields the bound \eqref{eq:p-Es} also in {\bf Case 1}. \par

Turning next to the upper bound in \eqref{eq:7}, considering the identity in \eqref{eq:ubd-unif-Gn} and the characterization of $c(H,\delta)$ via
the \abbr{rhs} of \eqref{eq:multans} at $\sk=1$, this amounts to showing that for 
$\KK_n^{(m)} = \{ A_{G_n} \in \XX_n^{(m)} \}$ and connected
$H$ of maximal degree $\Delta \ge 2$,
\begin{align}
\liminf_{n \to \infty} a_{n,p}^{-1} \log \bP_p(\{\mathrm{hom}(H,G_n) \geq 1+\delta'\} \cap \KK^{(m)}_n) &\ge 
- x - \frac{1}{2} y^2 \,,
\label{eq:prob-m-lbd-hub}
\end{align}
provided $x,y > 0$ satisfy 
\begin{equation}\label{eq:xy-hom}
\sfP_{H^\star}(x)+ y^{\verts(H)} \, \ind_{\{ H\;\;  \mathrm{is} \;\; \Delta\textrm{-}\mathrm{regular} \}} \ge 1+ \delta \,.
\end{equation} 
Fixing $x,y \ge 0$ for which \eqref{eq:xy-hom} holds, we choose here as
$\bP_\star$ the inhomogeneous \abbr{er}-model whose edge probabilities are set by
the matrix $X_n^\star \in \XX_n^{(m)}$ of the form 
\begin{equation}\label{eq:clhub}
X_n^\star= 
\begin{bmatrix}
\textbf{1} & \mathbf{1} & \mathbf{1} \\ 
\textbf{1} & \mathbf{1} & \mathbf{q} \\
\textbf{1} & \textbf{q} & \textbf{q}
\end{bmatrix},
\end{equation} 
with principal blocks of (integer) sizes $s_1$, $s-s_1$, $n-s$, where
$s_1 \sim x p^\Delta n$, $s \sim y p^{\Delta/2} n$ (so $s_1 \ll s \ll n$),
and $q$ satisfies the global edge constraint 
\begin{equation}\label{eq:q-hub-m}
(1-q) [s_{\edges} + (n-s) s_1]  + q \, n_{\edges} = p \, n_{\edges} \,.
\end{equation}
As before, $\bP_\star$ is supported on $\AA_n$ (now for  the edges 
$\sfE_n^\star$ within the ${\bf 1}$-blocks of \eqref{eq:clhub}), and thanks to 
\eqref{eq:clhub}-\eqref{eq:q-hub-m}, the relation \eqref{eq:rnd-val} holds 
throughout $\AA_n \cap \KK_n^{(m)}$. The contribution to $\frac{1}{2} I_p(X_n^\star)$ from the
$(x+y^2/2) n^2 p^\Delta (1+o(1))$ entries in the 
${\bf 1}$-blocks of $X_n^\star$ of \eqref{eq:clhub}
matches the lower bound in \eqref{eq:prob-m-lbd-hub}.
Due to the constraint \eqref{eq:q-hub-m}, the value 
of all but $O(n^2 p^\Delta)$ entries of the latter matrix, is $q=p - O(p^\Delta)$, so
those (non ${\bf 1}$) 
entries have a cumulative $o(a_{n,p})$ effect on $I_p(X_n^\star)$ (see \eqref{eq:qpbound}). 
Thus, as in the preceding proof of \eqref{eq:8}, it 
suffices to show \eqref{eq:star-lln}, albeit with $\KK_n^{(m)}$ instead of $\KK_n^d$. 

To this end,
recall that, $\wt{n}/n \to 1$ and $q/p \to 1$. For $\Delta$-regular $H$, the clique of size $(s-s_1)$  planted in the middle of 
$X_n^\star$ contributes $y^{\verts(H)}(1+o(1))$ to $\mathrm{hom}(H,G_n)$  
(see \eqref{eq:exess-clique}, where 
$2 \edges(H) = \Delta \verts(H)$ for any $\Delta$-regular $H$). Further, by definition of $H^\star$, 
restricting $H$ to $S^c := V(H) \setminus S$ for an independent set $S$ of $H^\star$, 
yields a sub-graph $H_{S^c}$ of precisely $
\edges(H) - \Delta |S|$ edges.
From \eqref{eq:xy-hom} and the definition of 
$P_{H^\star}(\cdot)$ we thus deduce that
\begin{align*}
(1+\delta - y^{\verts(H)} \, \ind_{\{ H\;\;  \mathrm{is} \;\; \Delta\textrm{-}\mathrm{regular} \}})
n^{\verts(H)} p^{\edges(H)} &\le \sum_{\substack{S \textrm{ independent}\\ \textrm{set of } H^\star}} 
x^{|S|}  n^{\verts(H)} p^{\edges(H)} \\ 
& = (1+o(1)) 
\sum_{\substack{S \textrm{ independent}\\ \textrm{set of } H^\star}} 
s_1^{|S|} \, \wt{n}^{\verts(H_{S^c})} q^{\edges(H_{S^c})} \,,
\end{align*}
which in turn yields for any fixed $\vep < (\delta-\delta')/(1+\delta)$ and $n$ large enough the bound
\begin{equation}\label{eq:move-to-q2}
\bP_\star (\mathrm{hom}(H,G_n) < 1+\delta') \le
\sum_{\substack{S \textrm{ independent set}\\ \textrm{of } H^{\star}; \; e(H_{S^c}) \ge 1}} 
\bP_q(\mathrm{hom}(H_{S^c},G_{\wt{n}}) < 1-\vep) \,.
\end{equation}
Applying \eqref{eq:lbd-J-hom} for $J=H_{S^c}$ and enumerating over the independent sets $S$ 
on the \abbr{rhs} of \eqref{eq:move-to-q2}, we deduce that its \abbr{lhs} is bounded 
for all $n$ large enough by $\exp(- O(a_{n,p}) )$. Thus, as in the proof of \eqref{eq:star-lln}, it 
remains only to show that similarly to \eqref{eq:star-regular}, we have here that
\begin{equation}\label{eq:star-reg-new}
a_{n,p}^{-1} \log \bP_\star(\AA_n \cap \KK_n^{(m)}) \to 0 \,.
\end{equation}
Our choice of $q$ in \eqref{eq:q-hub-m} is 
such that under $\bP_\star$ the requirement to be in $\XX_n^{(m)}$ amounts to the number of edges in the 
${\bf q}$-block of size $L_q = O(n^2)$ in $X_n^\star$, matching its specified integer valued
mean $q L_q$. Further, $q/p \to 1$, hence $q L_q = O(m)$ and \eqref{eq:star-reg-new} then 
holds by  Pittel's inequality and \eqref{eq:p-range}.

	\section{Inhomogeneous graph ensembles}\label{sec:sbm}
	\subsection{Proof of \Cref{th:sbmprob}}
As in the proof of \Cref{thm:regprob}, our starting point towards proving \eqref{eq:sbm1}
is again \eqref{eq:ip2}, taking now  
$K=[0,1]^{n_\edges}$ and $\R_+$-valued $h(\cdot)=\mathrm{Hom}(H,\cdot)$, as in the 
proof of \cite[Thm. 1.1 \& 1.2]{cod}, while replacing the constant vector $p \bm{1}$ 
with the given $\bm{p}=(p_{ij})$ that corresponds to $\bP^{[\ell]}$. Note that our
assumption that $c_\infty := \max_{n,r,r'} \{ c^{(n)}_{r r'} \}$ is finite, with
$\alpha_o := \inf_n \{ \alpha^{(n)}_1 \}$ and
$c_o := \inf_n \{ c^{(n)}_{11} \}$ positive, guarantee that both $b_H^{(n)}$ and
$\phi_{n,\bm{p}}(H,t)/\phi_{n,p}(H,t)$ be bounded
away from zero and infinity, per fixed $H$ and $t >1$. In particular, up to 
some change of universal constants the uniform bounds 
\cite[(6.1) and (6.2)]{cod} apply to $\phi_{n,\bm{p}}(H,t)$.
Thus, using hereafter 
the \emph{same cover} of $\{0,1\}^{n_\edges} \setminus \EE$ by $\exp(o(a_{n,p}))$ many 
closed convex sets $\{B_i\}$ (which satisfy \eqref{eq:h1} for arbitrarily small $\delta>0$), 
as in the proof of \cite[Thm. 1.1 \& 1.2]{cod}, one needs only to verify that under our inhomogeneous \abbr{ER}-law:
\newline
(a). For any $\gamma<\infty$ the exceptional sets 
$\EE_0(\kappa_0)$ from \cite[Prop. 3.4]{cod} and $\LL_F(\kappa_1)$ of 
\cite[(3.14)]{cod} for strictly induced sub-graphs $F \prec H$, are of probability
$\exp(-\gamma a_{n,p})$ when $\kappa_i(\gamma)$ are large enough.
\newline
(b).  For  $\kappa = \kappa_p  \gg \log (1/p)$ the event 
\begin{equation}\label{eq:GG-com}
\EE_0(\kappa):=\GG(\sqrt{\kappa} (n p), C' \sqrt{n p})^c \,,
\end{equation}
from \cite[(4.2)]{cod}, satisfies the bound \cite[(4.5)]{cod}.
\newline
To this end,  we replace the adjacency matrix ${\sf J}_n$ of the complete graph 
(see \cite[(1.40)]{cod}), by the weighted matrix 
${\sf J}_n = p(n)^{-1} {\bf p}$ of zero main diagonal and uniformly 
bounded above and below entries (in particular, with 
$n^{-1} \|{\sf J}_n \|_{\mathrm{HS}}$ uniformly bounded). Then, further
examining \cite{cod}, 
the preceding tail probability bounds (a) and (b) are direct consequences of the following analogs of 
\cite[(4.7)]{cod} and \cite[(6.13)]{cod}, for the inhomogeneous adjacency matrix 
$X_n:=A_{G^{[\ell]}_n}$.
\begin{lemma}\label{lemma:sbmtight}
For some $C'$ finite, $c_\star>0$, any $t \geq 0$, $p \ge \, (c_o n)^{-1} \log n$ and all $1\leq k \leq n$, we have
\begin{align}\label{eq:hs}
\bP^{[\ell]}(\|(X_n -p {\sf J}_n)_{\leq k}\|_{\mathrm{HS}} \geq  C' \sqrt{knp} + t) & \leq 4 e^{-t^2/16} \, , 
\\
\label{eq:homK11}
\bP^{[\ell]} ( {\bf 1}^T X_n {\bf 1} > \kappa n^2 p) & \le e^{-c_\star \kappa n^2 p}  \,.
\end{align}
\end{lemma}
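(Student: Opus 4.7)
My plan is to derive each of \eqref{eq:hs} and \eqref{eq:homK11} from a standard concentration inequality that uses only the entrywise boundedness and the variance upper bound $p_{ij} \le c_\infty p$ guaranteed by the standing assumption on $\{c^{(n)}_{r r'}\}$. Consequently, the inhomogeneity of $\bP^{[\ell]}$ enters the constants only through $c_\infty$ (and $c_o$, $\alpha_o$), and the argument parallels the one given in \cite{cod} for the homogeneous model.

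For \eqref{eq:hs}, the first step is to rewrite
$$f(X) := \|(X - p\mathsf{J}_n)_{\le k}\|_{\mathrm{HS}} = \sup_{\mathrm{rank}(U)\le k,\; \|U\|_{\mathrm{HS}}\le 1}\langle X - p\mathsf{J}_n, U\rangle_{\mathrm{HS}},$$
which exhibits $f$ as a convex, $1$-Lipschitz (w.r.t.\ $\|\cdot\|_{\mathrm{HS}}$) functional of the independent $\{0,1\}$-valued upper-triangular entries of $X_n$. Talagrand's convex concentration inequality then yields
$$\bP^{[\ell]}\bigl(|f(X_n) - \mathbb{M} f(X_n)| \ge t\bigr) \le 4\, e^{-t^2/16},$$
for $\mathbb{M} f(X_n)$ the median. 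To pass from median to the advertised centering $C'\sqrt{knp}$, I would use $\|M_{\le k}\|_{\mathrm{HS}} \le \sqrt{k}\,\|M\|_{\mathrm{op}}$ together with the operator-norm bound $\bE^{[\ell]}\|X_n - p\mathsf{J}_n\|_{\mathrm{op}} \le C''\sqrt{np}$; Talagrand's tail also forces $|\bE^{[\ell]} f - \mathbb{M} f| = O(1)$, which is absorbed into $C'$. Combining these ingredients yields \eqref{eq:hs}.

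For \eqref{eq:homK11}, I would write $\mathbf{1}^T X_n \mathbf{1} = 2\sum_{i<j} X_n(i,j)$ as a sum of independent Bernoulli variables of total mean $\mu_n \le c_\infty p\, n_\edges \le c_\infty n^2 p$. The multiplicative Chernoff bound then gives, for any $\kappa \ge \kappa_0(c_\infty)$ sufficiently large,
$$\bP^{[\ell]}\bigl(\mathbf{1}^T X_n \mathbf{1} > \kappa n^2 p\bigr) \le \exp\bigl(-c_\star\, \kappa n^2 p\bigr),$$
for some $c_\star = c_\star(c_\infty) > 0$ independent of $\kappa$.

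The main obstacle is really only the operator-norm estimate for the inhomogeneous matrix $X_n - p\mathsf{J}_n$ appearing in the proof of \eqref{eq:hs}; everything else reduces to Talagrand's convex concentration and a one-line Chernoff computation. Fortunately, in the regime $p \ge (c_o n)^{-1}\log n$ assumed in the lemma, such operator-norm bounds are well established via non-commutative Bernstein or Bandeira--van Handel type inequalities for symmetric random matrices with independent upper-triangular entries bounded by $1$ and variances at most $c_\infty p$, and these proofs are insensitive to inhomogeneity, requiring only a uniform upper bound on the entrywise variance. With this ingredient in hand, the entire lemma reduces to standard inputs once the right representations of $f$ and $\mathbf{1}^T X_n \mathbf{1}$ are isolated.
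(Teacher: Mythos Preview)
Your proposal is correct and follows essentially the same route as the paper: for \eqref{eq:hs} the paper likewise reduces (via the proof of \cite[Lemma~4.3]{cod}, which is exactly the Talagrand convex-distance argument you describe) to the expected operator-norm bound $\bE^{[\ell]}\|X_n-p{\sf J}_n\|_{\mathrm{op}}\lesssim\sqrt{np}$, which it then gets from a Bandeira--van Handel type inequality \cite[Example~4.10]{LHY}. For \eqref{eq:homK11} the paper simply invokes \cite[(6.13)]{cod} after the monotonicity observation $p_{ij}\le c_\infty p$, which is precisely the Chernoff bound you wrote out.
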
	
\begin{proof} Following  the proof of 
\cite[Lemma 4.3]{cod} it suffices for \eqref{eq:hs}
to show that 
\begin{equation}\label{eq:hsmed}
\bE^{[\ell]} (\|X_n -p {\sf J}_n\|_{\mathrm{op}} ) \leq  \frac{C'}{2} \sqrt{n p}  \,.
\end{equation}
Recall
\cite[Example 4.10]{LHY}, that for some $c<\infty$, if
$d_n := \max\limits_i \{\sum\limits_j p_{ij} \} \ge \log n$, then 
\begin{equation}\label{eq:lhy}
\bE \big( \|X_n-\bE X_n \|_{\mathrm{op}} \big) \leq c \sqrt{d_n} 
\end{equation}
for any symmetric $n \times n$ 
matrix $X_n$ of independent Bernoulli($p_{ij}$) entries. For $X_n = A_{G_{n}^{[\ell]}}$
we have that $\bE X_n = p {\sf J}_n$  and 
$(np)^{-1} d_n  \in [c_o, c_\infty]$, yielding \eqref{eq:hsmed}. 
Next, \eqref{eq:homK11} follows from  \cite[(6.13)]{cod} 
upon
noting that replacing in $\bP^{[\ell]}$ the values $\{c^{(n)}_{rr'}\}$ by $c_\infty$ may only increase the \abbr{lhs} 
of \eqref{eq:homK11}.
\end{proof}

Turning next to the proof of \eqref{eq:sbm2}, note first that upon replacing 
$I_p(Q)$ by $\frac{1}{2} I_{\bm{p}}(Q)$ and $\wt{C}$ by $\wt{C}/c_o$, the bound
\cite[(6.5)]{cod} holds in our setting, and thereby so does \cite[(6.8)]{cod}, provided 
$p \le 1/(2 c_\infty)$.  As $1 \gg p$ and $n p^{\Delta(H)} \gg 1$, it follows that 
for some $\eta_{n,p} = o(a_{n,p})$ and all $X \in \XX_n$
\begin{equation}\label{eq:basic-lbd}
\bP^{[\ell]}(\BB_X) \ge \exp(-\frac{1}{2} I_{\bm{p}}(X) - \eta_{n,p}) \,, \quad
\quad  \BB_X := \{ Z \in \XX_n : \|Z - X\|_{\mathrm{op}} \le C_0 \sqrt{n} \} \,.
\end{equation}
Fixing $t>1$, $\vep>0$, upon combining \eqref{eq:sbm1} with the lower bound 
of the form of \cite[(6.2)]{cod} (which holds for $\Phi^{[\ell]}_{n,\bm{p}}(H,\cdot)$), 
the reasoning around \cite[(6.25)-(6.26)]{cod} applies here verbatim, allowing us 
to set $\kappa_1=\kappa_1(H,t,\vep)$ and 
further restrict the minimization in \eqref{dfn:Phi-s} at $t+\vep$, to $X$ such that 
in addition $\BB_X$ intersects $\LL_F(\kappa_1)$ of \cite[(3.14)]{cod} for all $F \prec H$.
We proceed to set $\vep_0 \in [0,1]$ as in \cite[Sec. 6.3]{cod}, apart 
from replacing $\vep$ by $\vep \inf_n \{b_H^{(n)}\}$. As
$\BB_X$ satisfies \cite[(3.16)]{cod} for any $n \ge n_0$ and all $X \in \XX_n$
(thanks to our assumption that $n p^{2 \Delta_\star(H)} \gg 1$), we deduce 
from \cite[Prop. 3.7]{cod} that 
\[
\mathrm{hom}(H,X)\geq (t+\vep) b_H, \;\;
\BB_X \cap \LL_F(\kappa_1) \ne \emptyset,  \;\; \forall F \prec H \;\; 
\Longrightarrow \;\; \BB_X \subset \{ Z : \mathrm{hom}(H,Z) \ge t b_H \} \,.
\]
Considering the maximum of $\bP^{[\ell]}(\BB_X)$ over such $X$, we get from \eqref{eq:basic-lbd}
that for any $n \ge n_0$
\[
\log \bP^{[\ell]}(\mathrm{hom}(H,G^{[\ell]}_n) \geq t \, b_H ) \geq -\Phi^{[\ell]}_{n,\bm{p}}(H,t+\vep)-\eta_{n,p} \,.
\]
Dividing both sides by $a_{n,p}$ and taking $\vep=\vep_{n,p} \downarrow 0$ slowly enough, results with \eqref{eq:sbm2}.

	\section{Joint upper tails in Erd\H{o}s-R\'enyi graphs}\label{sec:joint}
	\subsection{Proof of \Cref{th:multprob}} By ignoring the requirements
	imposed on  $\mathrm{hom}(H_i,G_n)$ for $i>\sk$, we
	can and shall assume hereafter \abbr{wlog} 
	that $\sk'=\sk$, with $\Delta(H_i) = \Delta \ge 2$ for all $i \in [\sk']$. Further, in case
	$\Delta=2$ the range in \eqref{eq:p-range} be less stringent for cycles than for
	path (the only other connected graphs having $\Delta=2$). Thus, in {\bf Step 1}
    we take all $H_i=\sfC_{l_i}$ to be cycles and adapt the proof of
    \cite[(1.20)]{cod} for $p(n)$ determined by $l _o = \min_i \{l_i\}$, whereas in {\bf Step 2} 
    we consider the general case, adapting the proof of \cite[Thm. 1.1]{cod} with 
    $p(n)$ determined by $\Delta_\star := \max_i \{ \Delta_\star(H_i) \}$.

\noindent
{\bf Step 1} 
Thanks to \cite[Lemma 4.1 \& Prop 4.2]{cod}, apart from $A_{G_n}$ in the $\bP_p$-negligible 
event $\EE_0(\kappa_p)$ of \eqref{eq:GG-com}, 
we have that for suitable $\kappa_p \gg \log(1/p)$ and 
$k=k_n \gg (\log n)^{l_0/(l_0-2)}$ (see \cite[(7.3) \& (7.5)]{cod}), if 
$p(n)$ is in range \eqref{eq:p-range} for $H=\sfC_{l_o}$, then 
\[
 \|(A_{G_n})_{>k_n}\|_{S_l} \le \vep n p \,, \qquad \forall n \ge n_0, \quad \forall l \ge l_o \,.
\]
We apply here \eqref{eq:ip2} with $\bm{p}=p \bm{1}$, 
$\R_+^\sk$-valued $h(\cdot)=\mathrm{hom}(\uu{H},\cdot)$,
$K=[0,1]^{n_\edges}$ and an exceptional set  which is 
determined by the multiple Schatten norms via
 \begin{equation*}
 \EE(\vep)
  \coloneqq \bigcup_{l \ge l_o} \{X \in \mathbb{B}_{\textrm{HS}}(n) :  \quad \|X_{>{k}}\|_{S_l} > \vep np  \} \,.
\end{equation*}
For the net $\bI := \Sigma \times \VV$ of cardinality $|\bI| = \exp(O(k n \log n))=\exp(o(a_{n,p}))$ from
\cite[Lemma 7.2]{cod} and $\delta' >0$ as in the proof of \cite[Thm. 7.1]{cod}, we enumerate 
over $y \in \bI$, taking the closed convex sets
\begin{equation*}
\bB_y(\varepsilon) \coloneqq \{ M(y)+W+Z  : \; W \in \bB_{\textrm{HS}}(\delta' n),\;  
Z \in \ZZ_{y}(\varepsilon) \, \} \cap  \bB_{\textrm{HS}}(n) \,, 
\end{equation*}
where for each $\vep > 0$, 
\begin{equation*}
\ZZ_{y}(\varepsilon) \coloneqq \{ Z \in \textrm{Sym}_n(\bR) : \textrm{Im}(Z)\subseteq \textrm{Ker}(M(y)),\;
 \max_{i=1}^\sk \{ \|Z\|_{S_{l_i}} \} \le \vep n p  \,\} \,.
\end{equation*}
		This is a suitable cover, since by \cite[Claim 7.4]{cod} any 
		$X \in \bB_{\textrm{HS}}(n) \cap \EE(\vep)^c$ must be in $\bB_{y(X)}(\vep)$,
		whereas \cite[Claim 7.5]{cod} yields the fluctuation bound 		
		\[
\max_{y \in \bI} \,  \sup_{X\in \bB_{y}(\vep)} \, 
		\| h(X)- h(M(y)) \|_\infty \leq \vep^{l_o} + o(1)  \,,
		\]
which as in  the proof of \cite[Thm. 7.1]{cod}, completes our proof of \eqref{eq:multprob} (for cycles).

		\noindent
		{\bf Step 2} By a union bound we deduce from \cite[(6.22)]{cod}  that  for
		any large enough $\kappa_1 \ge \kappa_1(\uu{H},\uu{t})$ and all $1 \gg p \gg (\frac{1}{n} \log n)^{1/(2\Delta_\star)}$,
		the event
		\begin{equation}
		\EE_{\uu{H}} (\kappa_1):= \{X\in \XX_n:   \max_{i \in [\sk]}  
	\max_{F \prec H_i} \{ \mathrm{hom}(F,X) \} > \kappa_1 \},
		\end{equation}
		has a negligible $\bP_p$-probability. Further, from 
		\cite[Prop. 3.7]{cod} and our choice of $\Delta_\star$, it follows that 
		for 
		$h(\cdot)=\mathrm{hom}(\uu{H},\cdot)$,
		some $f_\star 
		>0$, any $\kappa_1 > 1$, all convex 
		$\BB \subset \EE_{\uu{H}}(\kappa_1)^c$ and $\vep \in [0,1]$,
		\begin{equation}\label{eq:bd-fluct}
		 \max_{X,Y \in \BB}\|X-Y\|_{\mathrm{op}} \leq \vep f_\star \kappa_1^{-1}
		  np^{\Delta_{\star}}  \qquad \Longrightarrow \qquad 
\sup_{X,Y \in \BB}  \|h(X) - h(Y) \|_\infty \le \vep \,.
		\end{equation}
		Fixing $\vep < 1$, we adapt the proof of \cite[Thm. 1.1]{cod}, applying
		again \eqref{eq:ip2}, now for
	    the $\bP_p$-negligible events  $\EE=\EE_0(\kappa_0) \cup \EE_{\uu{H}}(\kappa_1)$. 
	    We also take for $\BB_j=\CC_j$ the closed 
	    convex hull of sets from the net $\bI$ constructed in \cite[Prop. 3.4]{cod} for 
	    $\delta_o =\vep f_\star p^{\Delta_\star}/(4 C_\star \kappa_1)$
	    and $k_p = \lceil \kappa_0 (p^{\Delta}/\delta_o^2) \log (1/p) \rceil$, as in \cite[(6.14)-(6.15)]{cod}.
	    Thanks to \eqref{eq:p-range}, its 
	    cardinality is $|\bI| = \exp(O(k_p n \log n))=\exp(o(a_{n,p}))$, while combining  
	    \cite[(3.13)]{cod} with \eqref{eq:bd-fluct} yields maximal fluctuation 
	    $\vep$ of $h(\cdot)$ on each $\CC_j$. Thus, considering $\vep = \vep_{n,p} \downarrow 0$ 
	    slowly enough, concludes the proof of \Cref{th:multprob}.

	\subsection{Proof of \Cref{th:multvar}}
     We start with an asymptotically tight upper bound on the value of 
     $\phi^\sk_{n,p}(\uu{H},{\bf 1}+\uu{\delta})$,  analogously to \Cref{opt-reg}.
     \begin{propo}\label{opt-mult}
     For connected graphs $\{H_i, i \in [\sk]\}$, all of whom having maximal degree $\Delta \geq 2$.
     Fixing $\uu{\delta} \in \R_+^\sk$ and $x,y \ge 0$ such that \eqref{eq:xy-hom} holds simultaneously 
     for the pairs $(H_i,\delta_i)$, $i \in [\sk]$, 
     one has that for any $n^{-1/\Delta} \ll p=o(1)$, 
	\begin{equation}\label{eq:upd-phi-k}
		\limsup_{n \rightarrow \infty} \phi^\sk_{n,p}(\underline{H}, {\bf 1} + \underline{\delta}) \le x+ \frac{1}{2} y^2 \,.
		\end{equation}
	\end{propo}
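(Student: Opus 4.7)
The plan is to construct a single explicit block matrix $X_n^\star \in \XX_n$ that simultaneously realises a planted hub and a planted clique with sizes calibrated to $(x,y)$, and to verify that it is feasible for \eqref{eq:multvar} at threshold ${\bf 1}+\uu{\delta}$ while costing at most $2\bigl(x+\tfrac{1}{2}y^2\bigr)a_{n,p}(1+o(1))$ in entropy. Concretely, for an arbitrarily small fixed $\eta>0$ (to be sent to zero after $n\to\infty$), I would take $X_n^\star$ of the block form in \eqref{eq:clhub} with principal block sizes
\[
s_1 := \lceil (x+\eta)\,n p^\Delta \rceil, \qquad s-s_1 \text{ with } s := \lceil (y+\eta)\,n p^{\Delta/2}\rceil, \qquad n-s,
\]
and simply set $q := p$, which is admissible since \eqref{eq:multvar} imposes no row-sum or edge-count constraint on $X\in\XX_n$; the diagonal is set to zero. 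Our assumption $p \gg n^{-1/\Delta}$ yields $1 \ll s_1 \ll s \ll n$, so all block sizes are well-defined.

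Since $I_p(p)=0$ and the diagonal contributes only $n I_p(0) = O(np) = o(a_{n,p})$, the only leading contribution to $I_p(X_n^\star)$ comes from the ${\bf 1}$-entries: $(2 s_1 n - s_1^2 - s_1)$ off-diagonal ones in the hub rows and columns, and $(s-s_1)(s-s_1-1)$ ones in the clique block. Using $s_1 \ll s \ll n$ this gives
\[
I_p(X_n^\star) = \bigl(2 s_1 n + (s-s_1)^2\bigr)\log(1/p)\,(1+o(1)) = 2\bigl((x+\eta) + \tfrac{1}{2}(y+\eta)^2\bigr) a_{n,p}(1+o(1)).
\]

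The main step is the homomorphism count lower bound. For each $i\in[\sk]$ I would decompose the sum defining $\mathrm{hom}(H_i,X_n^\star)$ according to which of the three blocks each vertex of $H_i$ is mapped into, so that the sum becomes an enumeration over ordered partitions $V(H_i) = V_1\sqcup V_2\sqcup V_3$. Writing $E_1$ (resp.\ $E_2$) for the edges of $H_i$ with at least one endpoint in $V_1$ (resp.\ both endpoints in $V_2$), each partition contributes, after dividing by $n^{\verts(H_i)}p^{\edges(H_i)}$, a factor of order
\[
(s_1/n)^{|V_1|}\bigl((s-s_1)/n\bigr)^{|V_2|}\bigl((n-s)/n\bigr)^{|V_3|}\, p^{-|E_1|-|E_2|} \sim (x+\eta)^{|V_1|}(y+\eta)^{|V_2|}\, p^{\,\Delta|V_1| + \Delta|V_2|/2 - |E_1| - |E_2|},
\]
since entries incident to $V_1$-rows/columns or internal to the $V_2$-block equal $1$ and all remaining entries equal $p$. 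The elementary extremal bounds $|E_1| \le \Delta|V_1|$ and $|E_2| \le \Delta|V_2|/2$ force the exponent of $p$ to be non-negative, with equality — and hence a non-vanishing contribution as $p\to 0$ — in exactly two families: (A) $V_2=\emptyset$ with $V_1 = S$ an independent set of $H_i^\star$, contributing $(x+\eta)^{|S|}$; and (B) $V_2 = V(H_i)$, $V_1 = V_3 = \emptyset$, which is possible only when $H_i$ is $\Delta$-regular (connectedness of $H_i$ forces a $\Delta$-regular induced subgraph to be the whole graph) and contributes $(y+\eta)^{\verts(H_i)}$ via $2\edges(H_i) = \Delta\verts(H_i)$. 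Summing over $S$,
\[
\mathrm{hom}(H_i, X_n^\star) \ge \sfP_{H_i^\star}(x+\eta) + \ind_{\{H_i\;\mathrm{is}\;\Delta\text{-}\mathrm{regular}\}}\,(y+\eta)^{\verts(H_i)} + o(1),
\]
which by \eqref{eq:xy-hom} and strict monotonicity in $\eta$ exceeds $1+\delta_i$ for all $n$ sufficiently large. Thus $X_n^\star$ is feasible for \eqref{eq:multvar} at ${\bf 1}+\uu{\delta}$, giving $\phi^\sk_{n,p}(\uu{H},{\bf 1}+\uu{\delta}) \le (x+\eta) + \tfrac{1}{2}(y+\eta)^2 + o(1)$, and sending $n\to\infty$ then $\eta\downarrow 0$ yields \eqref{eq:upd-phi-k}. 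The only point requiring genuine care is the equality analysis for the partition above — the verification that (A) and (B) exhaust the non-vanishing families — which rests on the two extremal inequalities together with the connectedness observation for (B), and constitutes the combinatorial crux of the argument.
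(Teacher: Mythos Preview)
Your proof is correct and follows essentially the same approach as the paper: the same hub-plus-clique block matrix $X_n^\star$ of \eqref{eq:clhub}, the same entropy computation, and the same lower bound on $\mathrm{hom}(H_i,X_n^\star)$ via the independent-set and full-clique contributions. The only (minor) differences are that you set $q=p$ directly rather than via the edge constraint \eqref{eq:q-hub-m} (a legitimate simplification, since \eqref{eq:multvar} imposes no edge-count restriction on $\XX_n$, and it spares you the $I_p(q)$ estimate), and that you carry an explicit $\eta$-fattening where the paper invokes continuity in $(x,y)$; your exhaustive three-block partition analysis is also more detailed than needed for the lower bound, but it is correct.
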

	\begin{proof} By continuity, it suffices to consider $x,y>0$, for which 
	    our candidate be the 
	     weighted adjacency matrix $X_n^\star$ of \eqref{eq:clhub}, 
		with principal blocks of sizes $s_1  \sim x p^\Delta n$, $s-s_1$ for $s \sim y p^{\Delta/2} n$ and 
		$\wt{n}:= n-s$. Indeed, in the course of  proving \eqref{eq:prob-m-lbd-hub}, we have shown that 
		\begin{equation}\label{eq:Ip-val-xy}
		\frac{1}{2} I_p(X_n^\star) = (x + \frac{1}{2} y^2+o(1)) a_{n,p} 
		\end{equation}
		and consequently, it suffices to show that for any connected $H$ of maximal degree $\Delta \ge 2$, 
		\begin{equation}\label{eq:enough-hom}
		t(H,X_n^\star) \ge (y p^{\Delta/2})^{\verts(H)} +
		\sfP_{H^{\star}}(x) p^{\edges(H)}  + o(p^{\edges(H)}) \,.
		\end{equation}
		To this end, note that the contribution from having all vertices of $H$ 
	     in the middle block of $X_n^\star$ is $(s_2/n)^{\verts(H)} \sim  (y p^{\Delta/2})^{\verts(H)}$.
	     Proceeding to consider the contribution when no vertex of $H$ is within the middle block of $X_n^\star$, 
	     recall that $\edges(H_{S^c})=\edges(H) - \Delta |S|$ for any 
	     independent set $S$ of $H^\star$. Enumerating over the possible 
	     independent sets of $H^\star$,  the contribution to $t(H,X_n^\star)$
	     from having $S$ 
	     within the $s_1$-sized top principal block, is at least 
		\begin{align*}
		 \sum \limits_{S} \Big( \frac{s_1-1}{n} \Big)^{|S|} \Big(\frac{\wt{n}-1}{n} \Big)^{\verts(H)-|S|} q^{\edges(H_{S^c})} 
		&= (1+o(1))  \sum \limits_{S} x^{|S|} p^{\Delta |S|} q^{\edges(H_{S^c})}\\
	 &\ge (1+o(1)) \sfP_{H^{\star}}(x)p^{\edges(H)} \,.
		\end{align*}
	 This implies our claim \eqref{eq:enough-hom} and thereby completes the proof.
	\end{proof}
	\noindent
	Given \Cref{opt-mult}, similarly to {\bf Step 3} in proving 
	\Cref{thm:regvar}, it suffices to show that as $p \to 0$,
	\begin{align}\label{lbd:mult-var-cont}
		\phi^\sk_{p}(\underline{H}, \underline{\delta}) 
		&:= \frac{1}{2}  \inf \{ \frac{I_p(W)}{p^\Delta I_p(1)} : W \in \WW, 
		\; p^{-\edges(H_i)} t(H_i,W) \geq 1+\delta_i  \quad \forall i \in [\sk] \} \nonumber \\
		& \ge c(\uu{H},\uu{\delta}) - o(1)
		\end{align}
		(for $t(H,W)$ as in \eqref{dfn:t-graphon}). Since  
	   $x \mapsto I_p(x)$ is non-increasing on $[0,p]$, we may and will set $W=p+U$ in \eqref{lbd:mult-var-cont}, 
	   with $U \in [0,1-p]$. Then, following \cite{bglz} we define
	   for any $b \in [0,1]$ and such $U$, 
	    \begin{align*}
	    \BB_b = \BB_b(U) := \{ x \in [0,1] :  \int_0^1 U(x,y) dy \ge b \}, & \qquad \bar \BB_b := [0,1] \setminus \BB_b, \\
		x_b \coloneqq p^{-\Delta} \iint_{\BB_b \times \bar \BB_b} U^2(t,s) dt ds,& \qquad
		y^2_b \coloneqq p^{-\Delta} \iint_{\bar \BB_b \times \bar \BB_b} U^2(t,s) dt ds\,.
		\end{align*}
         Recall \eqref{eq:50} that for any $b \in [0,1]$,
		\begin{equation*}
		\frac{1}{2} I_p(p+U) \geq (1-o(1)) (x_b+\frac{1}{2} y_b^2) p^\Delta I_p(1).
		\end{equation*}		
		Further, from  \cite[(6.7)-(6.8)]{bglz} we know that if $I_p(p+U) = O(p^\Delta I_p(1))$
		and $p^{-\edges(H)} t(H,p+U) \geq 1+\delta$, then for any $b \to 0$ slowly enough in terms of $p$,
		\[ 
      \sfP_{H^{\star}}(x_b)+ y_b^{\verts(H)} \,
      \ind_{\{ H \;\;  \emph{is} \;\; \Delta\textrm{-}\mathrm{regular} \}} \ge 1+ \delta - o(1) \,.
      	 \]
	   The same choice of $b$ applies for 
	   multiple connected $H_i$ of maximal degree $\Delta \ge 2$, thereby bounding below 
	   $\phi^\sk_{p}(\cdot,\cdot)$ as in \eqref{lbd:mult-var-cont} and completing the proof of \Cref{th:multvar}.
	
	\subsection{Proof of \Cref{prop:jut}}
    Starting with the \abbr{ER} model, from Propositions \ref{th:multprob} and \ref{th:multvar} we get 
	\begin{equation}\label{eq:ubd-mult}
	\limsup_{n \rightarrow \infty} a_{n,p}^{-1} \log\bP_p(\mathrm{hom}(\underline{H},G_n) \geq {\bf 1}+\underline{\delta}) 
	\leq - c(\pi_\sk(\underline{H}),\pi_\sk(\underline{\delta})).
	\end{equation}
	Pittel's inequality \eqref{eq:ubd-unif-Gn} with
	 $\AA_n = \{ \mathrm{hom}(\uu{H},G_n) \geq {\bf 1} + \uu{\delta} \}$, yields for any $n$ and $m$, 
\begin{equation}\label{eq:ubd-unif-mult}
	\bP^{(m)} (\mathrm{hom}(\uu{H},G_n) \geq {\bf 1} + \uu{\delta})   
	\le 3 \sqrt{m} \,  \bP_p(\mathrm{hom}(\uu{H},G_n) \geq {\bf 1} + \uu{\delta}) \,,
	\end{equation}	
hence in view of \eqref{eq:p-range}, the upper bound \eqref{eq:ubd-mult} applies  also 
	for the law $\bP^{(m)}(\cdot)$.  Turning to the complementary lower bound, note that the probability
	of $\{\mathrm{hom}(\uu{H},G_n) \geq {\bf 1}+\uu{\delta} \}$ under both 
	$\bP_p(\cdot)$ and $\bP^{(m)}(\cdot)$ is at least
\[	
\bP_p(\{\mathrm{hom}(\uu{H},G_n) \geq {\bf 1}+\uu{\delta} \} \cap \KK^{(m)}_n) 	\,,
\]	
and thereby it suffices to lower bound the rate of decay of the latter. That is, to show  that 
per fixed $\uu{\delta} > \uu{\delta'} \in \R_+^{\sk'}$ and $x,y>0$ which 
satisfy \eqref{eq:xy-hom} simultaneously for $(H_i,\delta_i)$, $i \in [\sk]$, one has the bound
\begin{align}
\liminf_{n \to \infty} a_{n,p}^{-1} \log \bP_p(\{\mathrm{hom}(\uu{H},G_n) \geq {\bf 1}+\uu{\delta'}\} \cap \KK^{(m)}_n) 
&\ge - x - \frac{1}{2} y^2 \,.
\label{eq:lbd-unif-mult}
\end{align}	
The proof of \eqref{eq:lbd-unif-mult} proceeds precisely as the derivation of  
\eqref{eq:prob-m-lbd-hub}, by first making a change of the measure to the planted 
\abbr{er}-model $\bP_\star$ that corresponds to $X_n^\star$ of \eqref{eq:clhub}, after
which it remains only to show that \eqref{eq:star-lln} holds simultaneously for all
$(H_i,\delta'_i)$, $i \le \sk'$.  In case $i \le \sk$ the same argument as in the proof of
\eqref{eq:star-lln} applies here, thanks to our assumption that \eqref{eq:xy-hom} 
holds for $H_i$ and some $\delta_i>\delta'_i$. Next, fixing $i  \in (\sk,\sk']$,
for $H=H_i$ let $S^c$ be any maximal subset of $V(H)$, such that  $\Delta(H_{S^c}) \le \Delta$.
Clearly $S \ne \emptyset$ since $\Delta(H_i) > \Delta$, while the 
maximality of $S^c$ implies that $\edges(H) \ge \edges(H_{S^c}) + (\Delta+1) |S|$.
Consequently,
the contribution under $\bP_\star$ to $\mathrm{hom}(H,G_n)$
from homomorphisms with $S$ in the hub of size $s_1$ of $X^\star_n$ 
and $S^c$ in its ${\bf q}$-block of size $\wt{n}$, is at least
$\mathrm{hom}(H_{S^c},G_{\wt{n}})$ times 
\[
\Big(\frac{s_1-1}{n}\Big)^{|S|} \Big(\frac{\wt{n}-1}{n}\Big)^{|S^c|} q^{\edges(H_{S^c})} p^{-\edges(H)}
\ge x^{\Delta |S|} p^{-\Delta} (1-o(1))\,.
\] 
Since the latter (non-random) factor diverges for any $x>0$ fixed and $p=p(n) \to 0$, the required bound 
\eqref{eq:star-lln} holds for $H=H_i$ and \emph{any fixed} $\delta'<\infty$, provided  
\begin{equation}\label{eq:lower-tail-lbd}
\liminf_{n \to \infty} a_{n,p}^{-1} \log \bP_q(\mathrm{hom}(H_{S^c},G_{\wt{n}}) < 1-\vep) < 0, \quad
\forall \vep >0, \quad 1 \gg p(n) \gg n^{-1/\Delta} \,.
\end{equation}
By construction  
$\Delta(H_{S^c}) \le \Delta$, in which case we have already proved \eqref{eq:lower-tail-lbd} 
(in the course of proving \Cref{th:correg+unif}). In conclusion,  
\eqref{eq:star-lln} holds for all $H_i$, $i \in [\sk']$, hence \eqref{eq:lbd-unif-mult} holds as well, 
completing the proof of the theorem.

	\section*{Acknowledgment}
We thank Nick Cook, Anita Liebenau and Subhabrata Sen for helpful discussions. 
We further thank Sourav Chatterjee, Ben Gunby, Eyal Lubetzky, Andrzej Rucinski, Yufei Zhao and the anonymous referee,
for their feedback on our first drafts, and Wojciech Samotij for suggesting the improvement via the use of \cite{samotij}
(cf. \Cref{rem:samot}). 
\\
The research of A.D. is partially supported by NSF grant DMS-16130.

	%
	%
	%
	\bibliographystyle{plain}
	\bibliography{abc}

\end{document}